\numberwithin{equation}{section}
\theoremstyle{plain}
\newtheorem{thm}{Theorem}[section]
\theoremstyle{definition}
\newtheorem{defi}[thm]{Definition}
\newcommand{\be}{\begin{equation}}
\newcommand{\ee}{\end{equation}}
\def\R{{\mathbb R}}
\def\N{{\mathbb N_{0}^{2}}}
\def\C{{\mathbb C}}
\def\S3{{{\mathbb S}^3}}
\def\SU2{{{\rm SU}(2)}}
\def\Rn{{{\mathbb R}^n}}
\def\HS{{\mathtt{HS}}}
\def\p#1{{\left({#1}\right)}}
\def\jp#1{{\left\langle{#1}\right\rangle}}
\def\Dcal{{\mathcal D}}
\DeclareMathOperator{\Tr}{Tr}
\def\Rr{{\mathbb R}}
\def\C{{\mathbb C}}
\def\Rn{{\mathbb R}^n}
\def\R2n{{\mathbb R}^{2n}}
\def\S{{\mathcal S}}
\def\Rr{{\mathbb R}}
\def\Rn{{\mathbb R}^n}
\def\C{{\mathbb C}}
\def\A{{\mathbb A}}
\def\H{{\mathbb H}}
\def\R2{{\mathbb R}^2}
\def\R2n{{\mathbb R}^{2n}}
\def\S{{\mathcal S}}
\def\H{{\mathcal H}}
\def\T1{{\mathbb T}^1}
\newcommand{\eps}{\varepsilon}
\begin{document}

\title[Very weak solutions of wave equation for Landau Hamiltonian]
{Very weak solutions of wave equation for Landau Hamiltonian with irregular electromagnetic field}

\author[Michael Ruzhansky]{Michael Ruzhansky}
\address{
  Michael Ruzhansky:
  \endgraf
  Department of Mathematics
  \endgraf
  Imperial College London
  \endgraf
  180 Queen's Gate, London, SW7 2AZ
  \endgraf
  United Kingdom
  \endgraf
  {\it E-mail address} {\rm m.ruzhansky@imperial.ac.uk}
  }
\author[Niyaz Tokmagambetov]{Niyaz Tokmagambetov}
\address{
  Niyaz Tokmagambetov:
  \endgraf
    al--Farabi Kazakh National University
  \endgraf
  71 al--Farabi ave., Almaty, 050040
  \endgraf
  Kazakhstan,
  \endgraf
   and
  \endgraf
    Department of Mathematics
  \endgraf
  Imperial College London
  \endgraf
  180 Queen's Gate, London, SW7 2AZ
  \endgraf
  United Kingdom
  \endgraf
  {\it E-mail address} {\rm n.tokmagambetov@imperial.ac.uk}
 }

\thanks{The authors were supported in parts by the EPSRC
grant EP/K039407/1 and by the Leverhulme Grant RPG-2014-02,
as well as by the MESRK grant 0773/GF4. No new data was collected or generated during the course of research.}

\date{\today}

\subjclass{35D99, 35L81, 35Q99, 42C10, 58J45.} \keywords{wave equation, well--posedness,
electromagnetic field, Cauchy problem, Landau
Hamiltonian}

\begin{abstract}
In this paper we study the Cauchy problem for the Landau Hamiltonian wave equation, with time dependent irregular (distributional) electromagnetic field and similarly irregular velocity. For such equations, we describe the notion of a `very weak solution' adapted to the type of solutions that exist for
regular coefficients. The construction is based on considering Friedrichs--type mollifier of the coefficients and
corresponding classical solutions, and
their quantitative behaviour in the regularising parameter.
We show that even for distributional coefficients, the Cauchy problem does have a very weak solution,
and that this notion leads to classical or distributional  type solutions under conditions when
such solutions also exist.
\end{abstract}

\maketitle

\section{Introduction}

The purpose of this paper is to establish the well-posedness results for the wave equation for the Landau Hamiltonian with irregular electromagnetic field and similarly irregular velocity. We are especially interested in distributional irregularities appearing, for example, when modelling electric shocks by $\delta$-function type behaviour. While this leads to fundamental mathematical difficulties for the usual distributional interpretation of the equation (and of the Cauchy problem) due to impossibility of multiplication of distributions (see Schwartz \cite{S54}) we are able to establish the well-posedness using a notion of very weak solutions introduced in \cite{GR15b} in the context of space-invariant hyperbolic problems.
This notion also allows us to recapture the classical/distributional solution to the Cauchy problem for the Landau Hamiltonian under conditions when it does exist.

\smallskip
Thus, we consider a non-relativistic particle with mass $m$ and electric charge $e$ moving in a given electromagnetic field.
We concentrate on the 2D version and then indicate in Section \ref{SEC:n} the changes for the multidimensional case in $\mathbb R^{2d}$.
To describe the electromagnetic field in the plane one usually uses the electromagnetic scalar and vector potentials $q, \A$. In a brief description of the physical model below we follow \cite{ABGM15} to which we can also refer for some more details. 
The dynamics of a particle with mass $m$ and charge $e$ on the Euclidean $xy$--plane interacting with a perpendicular homogeneous electromagnetic field, is determined by the Hamiltonian (see \cite{LL77})
\begin{equation} \label{eq:Hamiltonian}
\H_{0}:=\frac{1}{2m} \p{i h \nabla-\frac{e}{c}\A}^{2}+eq,
\end{equation}
where $h$ denotes Planck's constant, $c$ is the speed of light and
$i$ the imaginary unit (see also Section \ref{SEC:n}). We denote by $2B>0$ the strength of the
magnetic field, and can choose the symmetric gauge given by
$$
\A=\p{- B y, B x}.
$$
As usual for mathematical models, we set $m=e=c=h=1$ in \eqref{eq:Hamiltonian}, which leads to the Landau Hamiltonian
$$
\H_{1}:=\H+q,
$$
where
\begin{equation} \label{eq:LandauHamiltonian}
\H:=\frac{1}{2} \p{\p{i\frac{\partial}{\partial x}-B
y}^{2}+\p{i\frac{\partial}{\partial y}+B x}^{2}},
\end{equation}
acting on the Hilbert space $L^{2}(\mathbb R^{2})$. It is well known
(see \cite{F28, L30}) that the spectrum of the
operator $\H$ consists of infinite number of eigenvalues with
infinite multiplicity of the form
\begin{equation} \label{eq:HamiltonianEigenvalues}
\lambda_{n}=\p{2n+1}B, \,\,\, n=0, 1, 2, \dots \,.
\end{equation}
These eigenvalues are called the Euclidean Landau levels. We denote the eigenspace of $\H$ corresponding to the eigenvalue $\lambda_{n}$ in \eqref{eq:HamiltonianEigenvalues} by
\begin{equation} \label{eq:HamiltonianEigenspaces}
\mathcal A_{n}(\mathbb R^{2})=\{\varphi\in L^{2}(\mathbb R^{2}), \,\, \H \varphi=\lambda_{n}\varphi\}.
\end{equation}
The following functions form an orthogonal basis for $\mathcal
A_{n}(\mathbb R^{2})$ (see \cite{ABGM15, HH13}):
{\small
\begin{equation}
\label{eq:HamiltonianBasis} \left\{
\begin{split}
e^{1}_{k, n}(x,y)&=\sqrt{\frac{n!}{(n-k)!}}B^{\frac{k+1}{2}}\exp\Big(-\frac{B(x^{2}+y^{2})}{2}\Big)(x+iy)^{k}L_{n}^{(k)}(B(x^{2}+y^{2})), \,\,\, 0\leq k, {}\\
e^{2}_{j, n}(x,y)&=\sqrt{\frac{j!}{(j+n)!}}B^{\frac{n-1}{2}}\exp\Big(-\frac{B(x^{2}+y^{2})}{2}\Big)(x-iy)^{n}L_{j}^{(n)}(B(x^{2}+y^{2})), \,\,\, 0\leq j,
\end{split}
\right.
\end{equation}}
where $L^{(\alpha)}_{n}$ is the Laguerre polynomial defined as
$$
L^{(\alpha)}_{n}(t)=\sum_{k=0}^{n}(-1)^{k}C_{n+\alpha}^{n-k}\frac{t^{k}}{k!}, \,\,\, \alpha>-1.
$$
To simplify the notation further we denote
\begin{equation}\label{EQ:eks}
e^{k}_{\xi}:=e^{k}_{j, n} \,\,\ \hbox{for} \,\,\,  \xi=(j,n), \,\,\, j, n=0, 1, 2, ...; \,\,\, k=1, 2.
\end{equation}

To finish the brief literature review, we refer to \cite[Remark 1]{ABGM15}
for an explanation of relations of the basis
\eqref{eq:HamiltonianBasis}
to Feynman and
Schwinger's work on
finding matrix elements of the displacement
operator (see also Perelomov \cite[p. 35]{P86}),
the relations to complex Hermite polynomials (\cite{I16}),
to quantization questions \cite{ABG12,BG14,CGG10}, 
to time-frequency analysis, partial differential equations and planar
point processes, see \cite{A10,G08,HH13}, respectively.

We can also mention papers \cite{K16, N96}, where the authors investigated
properties of eigenfunctions of perturbed Hamiltonians, and in
\cite{S14, KP04, M91, PR07, PRV13, LR14, RT08}
asymptotics of the eigenvalues for perturbed Landau Hamiltonians
were described.

In this paper we are interested in the wave equation for the Landau Hamiltonian with time-dependent irregular electric potential and varying in time electromagnetic field.
More precisely, for a distributional propagation speed function $a=a(t)\geq 0$ and for the distributional electromagnetic scalar potential $q=q(t)$, we consider the
Cauchy problem for the Landau Hamiltonian $\H$ in the form
\begin{equation}\label{CPa}
\left\{ \begin{split}
\partial_{t}^{2}u(t,x)+a(t)[\H+q(t)] u(t,x)&=0, \; (t,x)\in [0,T]\times \mathbb R^{2},\\
u(0,x)&=u_{0}(x), \; x\in \mathbb R^{2}, \\
\partial_{t}u(0,x)&=u_{1}(x), \; x\in \mathbb R^{2}.
\end{split}
\right.
\end{equation}
A special feature of our analysis is that we want to allow $a$ and $q$ to be distributions. For instance, if the electric potential produces shocks these can be modelled with $\delta$-distributions, for example by taking $q=\delta_{1}$, the $\delta$-distribution at time $t=1$.
Moreover, if the velocity $a(t)$ also contained $\delta$-type terms, as an example of such an equation we could consider
\begin{equation}\label{CPa-ex1}
\left\{ \begin{split}
\partial_{t}^{2}u(t,x)+\delta_{1}[\H+\delta_{1}] u(t,x)&=0, \; (t,x)\in [0,T]\times \mathbb R^{2},\\
u(0,x)&=u_{0}(x), \; x\in \mathbb R^{2}, \\
\partial_{t}u(0,x)&=u_{1}(x), \; x\in \mathbb R^{2}.
\end{split}
\right.
\end{equation}
Moreover, we could also look at discontinuous speeds given by e.g. the Heaviside function $h(t)$ such that $h(t)=1$ for $t<1$ and $h(t)=2$ for $t\geq 1$, and singular electric fields, e.g.
$q(t)=\delta_{1}+h(t)$, in which case the Cauchy problem \eqref{CPa} would take the form
\begin{equation}\label{CPa-ex2}
\left\{ \begin{split}
\partial_{t}^{2}u(t,x)+h(t)[\H+\delta_{1}+h(t)] u(t,x)&=0, \; (t,x)\in [0,T]\times \mathbb R^{2},\\
u(0,x)&=u_{0}(x), \; x\in \mathbb R^{2}, \\
\partial_{t}u(0,x)&=u_{1}(x), \; x\in \mathbb R^{2}.
\end{split}
\right.
\end{equation}
The physical problem that we are interested in is as follows:
\begin{center}
{\em How to understand the Cauchy problems \eqref{CPa}-\eqref{CPa-ex2} and their well-posedness?}
\end{center}
There are several difficulties already at the fundamental level for such problems, first of all in general impossibility of multiplying distributions due to the famous Schwartz' impossibility result
\cite{S54}. Second, even if we could somehow make sense of the product $aq$ being a distribution by e.g. imposing wave front conditions, we would still have to multiply it with $u(t,x)$ which, a-priori, may also have singularities in $t$, thus leading to another multiplication problem. Moreover, another difficulty (for the global in space analysis of \eqref{CPa}) is that the coefficients of $\H$ increase in space thus leading to potential problems at infinity if we treat the problem only locally.

In our analysis we assume that $a$ is a positive distribution so that the Cauchy problem \eqref{CPa} is of hyperbolic type, at least when $a$ and $q$ are regular. More precisely, we will assume that there exists a constant $a_{0}>0$ such that
$$
a\geq a_{0}>0,
$$
where $a\geq a_{0}$ means that $a-a_{0}\geq0$, i.e. $\langle a-a_{0}, \psi\rangle\geq0$ for all $\psi\in C^\infty_0(\mathbb R)$, $\psi\ge 0$. Incidentally, the structure theory of distributions implies that $a$ is a Radon measure but this does not remove the multiplication problems or problem with understanding the meaning of the well-posedness of the Cauchy problem \eqref{CPa}.

Nevertheless, we are able to study the well-posedness of \eqref{CPa} using an adaptation of the notion of very weak solutions introduced in \cite{GR15b} in the context of hyperbolic problems with distributional coefficients in $\mathbb R^{n}$.

As noted, the equation \eqref{CPa} can not be, in general, understood distributionally, so we are forced to weaken the notion of solutions. However, we want to do it in a way so that we can recapture classical solutions should they exist.
Thus, in this paper we will show the following facts:

\begin{itemize}
\item The Cauchy problem \eqref{CPa} admits a {\em very weak solution} even for distributional-type Cauchy data $u_{0}$ and $u_{1}$. The very weak solution is unique in an appropriate sense.
\item If the coefficients $a$ and $q$ are regular so that the Cauchy problem \eqref{CPa} has a `classical' solution, the very weak solution recaptures this classical solution in the limit of the regularising parameter. This shows that the introduced notion of a very weak solution is consistent with classical solutions should the latter exist.
\item When the classical solution does not exist, the very weak solution comes with an explicit numerical scheme modelling the limiting behaviour of regularised solutions.
\end{itemize}

We also note that at the same time our analysis will yield results for the modified problem
\begin{equation}\label{CPb}
\left\{ \begin{split}
\partial_{t}^{2}u(t,x)+a(t)\H u(t,x)+q(t) u(t,x)&=0, \; (t,x)\in [0,T]\times \mathbb R^{2},\\
u(0,x)&=u_{0}(x), \; x\in \mathbb R^{2}, \\
\partial_{t}u(0,x)&=u_{1}(x), \; x\in \mathbb R^{2},
\end{split}
\right.
\end{equation}
for distributions $a,q$ with $a\geq a_{0}>0$ for some constant $a_{0}.$

For second order operators $\H$ independent of $x$ the Cauchy problems of this type have been intensively studied, however for more regular (starting from H\"older) coefficients, see for example
\cite{Cicognani-Colombini:JDE-2013,Colombini-deGiordi-Spagnolo-Pisa-1979,
Colombini-del-Santo-Kinoshita:ASNS-2002,
Colombini-del-Santo-Reissig:BSM-2003,DS} and references therein.
For the setting of distributional coefficients see \cite{GR15b}.

The analysis of this paper is different from the one in \cite{GR15b} that was adapted to constant coefficients in $\Rn$. At the same time, the techniques of the present paper may be extended to treat more general operators, however, since such analysis is more abstract and requires more background material, it will appear elsewhere.

The description of appearing function spaces is carried out in the spirit of \cite{DR16} using the general development of nonharmonic type analysis carried out by the authors in \cite{RT16} which is, however, `harmonic' in the present setting. The treatment of the global well-posedness in the appearing function spaces is an extension of the method developed in \cite{GR15} in the context of compact Lie groups.

In Section \ref{SEC:Inhom} we show an extension of the construction to also consider the inhomogeneous wave equation
\begin{equation}\label{CPa-InH0}
\left\{ \begin{split}
\partial_{t}^{2}u(t,x)+a(t)[\H+q(t)] u(t,x)&=f(t,x), \; (t,x)\in [0,T]\times \mathbb R^{2},\\
u(0,x)&=u_{0}(x), \; x\in \mathbb R^{2}, \\
\partial_{t}u(0,x)&=u_{1}(x), \; x\in \mathbb R^{2},
\end{split}
\right.
\end{equation}

The structure of the paper is as follows. In Section \ref{SEC:results} we formulate our main results.
In Section \ref{SEC:Prelim} we discuss elements of the global Fourier analysis associated to the Landau Hamiltonian as a special case of abstract constructions
that have been developed in \cite{RT16}.
In Section \ref{SEC:reduction} we prove Theorem \ref{theo_case_1} and
in Section \ref{SEC:pf2} we prove
Theorem \ref{theo_vws}. In Section \ref{SEC:cons} we establish the uniqueness of very weak solutions and their consistence with `classical' solutions when they exist.
In Section \ref{SEC:Inhom} we give an extension of our constructions to the inhomogeneous wave equation. In Section \ref{SEC:n} we discuss an extension to higher dimensions, namely, to the Landau Hamiltonian in $\mathbb R^{2d}$.

\section{The main results}
\label{SEC:results}

In our results below, concerning the Cauchy problem \eqref{CPa}, as the preliminary step we first
carry out the analysis in the strictly hyperbolic regular case $a(t)\ge a_0>0$, for differentiable $a, q$ with $\partial_{t}a,\partial_{t}q\in L^{\infty}([0,T])$.
Thus, we denote by $L_{1}^{\infty}([0,T])$ the space functions $a\in L^{\infty}([0,T])$ with $\partial_{t}a\in L^{\infty}([0,T])$.
In this case we obtain the well-posedness in the Sobolev spaces  ${H}^{s}_\H$ associated to the operator $\H$: we define the Sobolev spaces $H^s_\H$ associated to
$\H$, for any $s\in\Rr$, as the space
$$
H^s_\H:=\left\{ f\in\Dcal'_{\H}(\mathbb R^{2}): \H^{s/2}f\in
L^2(\mathbb R^{2})\right\},
$$
with the norm $\|f\|_{H^s_\H}:=\|\H^{s/2}f\|_{L^2}.$
The global space of distributions $\Dcal'_{\H}(\mathbb R^{2})$ is defined in
Section \ref{SEC:Prelim}.

\begin{thm}
\label{theo_case_1}
Assume that $a, q\in L_{1}^{\infty}([0,T])$ are such that $a(t)\ge a_0>0$ and $q(t)\geq 0$.
For any $s\in\Rr$, if the Cauchy data satisfy
$(u_0,u_1)\in {H}^{1+s}_\H \times {H}^{s}_\H$,
then the Cauchy problem \eqref{CPa} has a unique solution
$u\in C([0,T],{H}^{1+s}_\H) \cap
C^1([0,T],{H}^{s}_\H)$ which satisfies the estimate
\begin{equation}
\label{case_1_last-est}
\|u(t,\cdot)\|_{{H}^{1+s}_\H}^2+\| \partial_t u(t,\cdot)\|_{{H}^s_\H}^2\leq
C (\| u_0\|_{{H}^{1+s}_\H}^2+\|u_1\|_{{H}^{s}_\H}^2).
\end{equation}
The same result is true also for the Cauchy problem \eqref{CPb}.
\end{thm}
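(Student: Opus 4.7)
The plan is to diagonalize $\mathcal H$ via the eigenbasis $\{e^k_\xi\}$ described in \eqref{eq:HamiltonianBasis}--\eqref{EQ:eks}, which (as elaborated in Section \ref{SEC:Prelim}) gives rise to a Plancherel-type identity $\|f\|_{H^s_\H}^2 \simeq \sum_{\xi,k}\lambda_n^{s}|\widehat f^{\,k}_\xi|^2$ with $\lambda_n=(2n+1)B$. Writing $u(t,x)=\sum_{\xi,k}\widehat u^{\,k}_\xi(t)\,e^k_\xi(x)$, the Cauchy problem \eqref{CPa} reduces, mode by mode, to the scalar linear ODE
\begin{equation*}
\partial_t^2 v(t) + a(t)\bigl(\lambda_n+q(t)\bigr)v(t)=0,\qquad v(0)=\widehat u_0^{\,k}{}_\xi,\quad \partial_t v(0)=\widehat u_1^{\,k}{}_\xi,
\end{equation*}
which, since the coefficient is $L^\infty$, is uniquely solvable for each $(\xi,k)$ by standard linear ODE theory. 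This furnishes a candidate solution, with uniqueness immediate from uniqueness on each mode.

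The heart of the proof is an energy estimate. For each mode I would introduce the symbol-wise energy
\begin{equation*}
E_\xi(t) := |\partial_t v(t)|^2 + a(t)\bigl(\lambda_n+q(t)\bigr)|v(t)|^2 .
\end{equation*}
Differentiating and substituting the ODE makes the cross term $2\,\mathrm{Re}\,(\partial_t^2 v\cdot\overline{\partial_t v})$ cancel the term $2a(\lambda_n+q)\,\mathrm{Re}(v\,\overline{\partial_t v})$, leaving
\begin{equation*}
E_\xi'(t) = a'(t)(\lambda_n+q(t))|v|^2 + a(t)q'(t)|v|^2 .
\end{equation*}
Since $a\ge a_0>0$ and $q\ge 0$, the first term is bounded by $(\|a'\|_\infty/a_0)E_\xi(t)$. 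For the second, I use the crucial spectral gap $\lambda_n\ge B$ to write $a(t)|v|^2 \le B^{-1}a(t)\lambda_n|v|^2 \le B^{-1} E_\xi(t)$, which gives $a(t)q'(t)|v|^2 \le (\|q'\|_\infty/B)E_\xi(t)$. Hence $E_\xi'(t)\le C E_\xi(t)$ with $C$ independent of $\xi$, and Gronwall yields $E_\xi(t)\le e^{CT}E_\xi(0)$.

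To convert this into \eqref{case_1_last-est}, I multiply by $\lambda_n^s$ and sum over all $(\xi,k)$. The lower bound $a(t)(\lambda_n+q(t))\ge a_0\lambda_n$ gives
\begin{equation*}
\sum_{\xi,k}\lambda_n^s E_\xi(t) \;\gtrsim\; \|\partial_t u(t,\cdot)\|_{H^s_\H}^2 + \|u(t,\cdot)\|_{H^{1+s}_\H}^2,
\end{equation*}
while at $t=0$ the upper bound $a(0)(\lambda_n+q(0))\le \|a\|_\infty(1+B^{-1})\lambda_n$ gives
\begin{equation*}
\sum_{\xi,k}\lambda_n^s E_\xi(0) \;\lesssim\; \|u_1\|_{H^s_\H}^2 + \|u_0\|_{H^{1+s}_\H}^2 .
\end{equation*}
Combining these with the Gronwall bound produces \eqref{case_1_last-est}, and the continuous dependence in $t$ on these norms then follows because each $\widehat u^{\,k}_\xi(t)$ is $C^1$ in $t$ and the series converges in the appropriate Sobolev norm by dominated convergence (majorised by the same bound applied on dyadic spectral pieces). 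For the modified problem \eqref{CPb} the reduced ODE is $\partial_t^2 v+(a(t)\lambda_n+q(t))v=0$; the same energy $E_\xi(t)=|\partial_t v|^2+(a(t)\lambda_n+q(t))|v|^2$, using $q\ge 0$ and the same spectral gap argument, runs through verbatim.

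The main obstacle I anticipate is the two-sided equivalence between the energy sum $\sum \lambda_n^s E_\xi$ and the Sobolev norms appearing in \eqref{case_1_last-est}: both the upper bound at $t=0$ and the lower bound at time $t$ require absorbing the potential term $q(t)|v|^2$ into a $\lambda_n$-weighted expression, which is precisely where the positivity of the Landau spectrum $\lambda_n\ge B>0$ (there is no zero mode) is decisive. Without this spectral gap, the zero-order perturbation $q$ could not be controlled by $\mathcal H$, and the estimate would degrade.
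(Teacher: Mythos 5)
Your proposal is correct and follows essentially the same path as the paper: Fourier-diagonalise via the Landau eigenbasis (i.e.\ the $\H$-Plancherel isomorphism) to reduce to the scalar ODE $\partial_t^2 v + a(t)(\lambda_n+q(t))v=0$ on each mode, run an energy estimate with Gronwall on a constant uniform in $\xi$, and sum back. The paper carries out the energy step by passing to the first-order system $\partial_t V = i\nu(\xi)A(t,\xi)V$ with symmetriser $S(t,\xi)$ and energy $E=(SV,V)$, but unwinding the definitions shows $(SV,V)$ is exactly your scalar energy $|\partial_t v|^2 + a(t)(\lambda_n + q(t))|v|^2$, and the uniform bound on $\|\partial_t S\|$ there relies on precisely the spectral gap $\nu^2(\xi)\ge B>0$ that you correctly flag as the crucial point for absorbing the $q$-terms.
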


\vspace{3mm}

Anticipating the material of the next section, using
Plancherel's identity \eqref{EQ:Plancherel}, in our case we can express the Sobolev norm as
\begin{equation}\label{EQ:Sob1}
\|f\|_{H^s_\H}= \p{ \sum_{\xi\in\N} (B+2B\xi_{2})^{s}
\sum_{j=1}^{2} \left| \int_{\mathbb
R^{2}}f(x)\overline{e^{j}_{\xi}(x)}dx\right|^2  }^{1/2},
\end{equation}
with $e_{\xi}^{j}$ as in \eqref{EQ:eks}.

In Theorem \ref{theo_case_1} the assumption of $q$ being real-valued is actually enough to assure the well-posedness, however, we assume  that $q\geq 0$ to facilitate the proofs of the distributional results later.

We now describe the notion of very weak solutions and formulate the corresponding results for distributions $a,q\in\Dcal'([0,T])$.
The first main idea is to start from the distributional coefficient $a$ to regularise it by convolution with a suitable mollifier $\psi$ obtaining families of smooth functions $(a_{\eps})_\eps$, namely
\begin{equation}\label{EQ:regs}
a_{\eps}=a\ast\psi_{\omega(\eps)},
\end{equation}
where  $\psi_{\omega(\eps)}(t)=\omega(\eps)^{-1}\psi(t/\omega(\eps))$ and $\omega(\eps)$ is a positive function converging to $0$ as $\eps\to 0$ to be chosen later.
Here $\psi$ is a Friedrichs--mollifier, i.e.  $\psi\in C^\infty_0(\mathbb R)$, $\psi\ge 0$ and $\int\psi=1$.
It turns out that the net $(a_{\eps})_\eps$ is $C^\infty$-\emph{moderate}, in the sense that its $C^\infty$-seminorms can be estimated by a negative power of $\eps$. More precisely, we will make use of the following notions of moderateness.

In the sequel, the notation $K\Subset\mathbb R$ means that $K$ is
a compact set in $\mathbb R$.

\begin{defi}
\label{def_mod_intro}
\leavevmode
\begin{itemize}
\item[(i)]
A net of functions $(f_\eps)_\eps\in C^\infty(\mathbb R)^{(0,1]}$
(i.e. $(f_\eps)_{\eps\in (0,1]}\subset C^\infty(\mathbb R)$)
is said to be $C^\infty$-moderate if for all $K\Subset\mathbb R$ and for all $\alpha\in\mathbb N_{0}$ there exist $N=N_{\alpha}\in\mathbb N_{0}$ and $c=c_{\alpha}>0$ such that
\[
\sup_{t\in K}|\partial^\alpha f_\eps(t)|\le c\eps^{-N-\alpha},
\]
for all $\eps\in(0,1]$.
\item[(ii)] A net of functions $(u_\eps)_\eps\in C^\infty([0,T];{H}^{s}_\H)^{(0,1]}$ is said to be $C^\infty([0,T];{H}^{s}_\H)$-moderate if there exist $N\in\mathbb N_{0}$ and $c_k>0$ for all $k\in\mathbb N_{0}$ such that
\[
\|\partial_t^k u_\eps(t,\cdot)\|_{H^{s}_\H}\le c_k\eps^{-N-k},
\]
for all $t\in[0,T]$ and $\eps\in(0,1]$.
\end{itemize}
\end{defi}
We note that the conditions of moderateness are natural in the sense that regularisations of distributions are moderate, namely we can regard
\begin{equation}\label{EQ:incls}
\textrm{ compactly supported distributions } \mathcal{E}'(\mathbb R)\subset \{C^\infty \textrm{-moderate families}\}
\end{equation}
by the structure theorems for distributions.

Thus, while a solution to the Cauchy problems may not exist in the space of distributions on the left hand side of \eqref{EQ:incls}, it may still exist (in a certain appropriate sense)
in the space on its right hand side. The moderateness assumption will be crucial allowing to recapture the solution as in \eqref{theo_case_1} should it exist. However, we note that regularisation with standard Friedrichs mollifiers will not be
sufficient, hence the introduction of a family $\omega(\eps)$ in the above regularisations.

We can now introduce a notion of a `very weak solution' for the Cauchy problem \eqref{CPa}.
\begin{defi}
\label{def_vws}
Let $s\in\mathbb R$ and $u_{0},u_{1}\in {H}^{s}_\H$. The net $(u_\eps)_\eps\in C^\infty([0,T];{H}^{s}_\H)$ is {\em a very weak solution of order $s$} of the
Cauchy problem \eqref{CPa} if there exist
\begin{itemize}
\item[]
$C^\infty$-moderate regularisations $a_{\eps}$ and $q_{\eps}$ of the coefficients $a$ and $q$,
\end{itemize}
such that $(u_\eps)_\eps$ solves the regularised problem
\begin{equation*}\label{CPbb}
\left\{ \begin{split}
\partial_{t}^{2}u_{\eps}(t,x)+a_{\eps}(t)[\H+q_{\eps}(t)] u_{\eps}(t,x)&=0, \; (t,x)\in [0,T]\times \mathbb R^{2},\\
u_{\eps}(0,x)&=u_{0}(x), \; x\in \mathbb R^{2}, \\
\partial_{t}u_{\eps}(0,x)&=u_{1}(x), \; x\in \mathbb R^{2},
\end{split}
\right.
\end{equation*}
for all $\eps\in(0,1]$, and is $C^\infty([0,T];{H}^{s}_\H)$-moderate.
\end{defi}
We note that according to Theorem \ref{theo_case_1} the regularised Cauchy problem
\eqref{CPbb} has a unique solution satisfying estimate \eqref{case_1_last-est}.

In \cite{GR15b} the authors studied weakly hyperbolic second order equations with time-dependent irregular coefficients, assuming that the coefficients are distributions. For such equations, the authors of  \cite{GR15b} introduced the notion of a `very weak solution' adapted to
the type of solutions that exist for regular coefficients. We now apply a modification of this notion to the Cauchy problems \eqref{CPa} and \eqref{CPb}.

\vspace{2mm}

In the following theorem we assume that $a$ is a strictly positive distribution, which means that there exists a constant $a_{0}>0$ such that $a-a_{0}$ is a positive distribution. In other words,
$$
a\geq a_{0}>0,
$$
where $a\geq a_{0}$ means that $a-a_{0}\geq0$, i.e. $\langle a-a_{0}, \psi\rangle\geq0$ for all $\psi\in C^\infty_0(\mathbb R)$, $\psi\ge 0$. Analogously, a distribution $q$ is called real-valued if
$\langle q, \psi\rangle\in\mathbb R$ for all real-valued test functions $\psi\in C^\infty_0(\mathbb R)$, and $q$ is positive if $\langle q, \psi\rangle\geq 0$ whenever $\psi\geq 0$.

The main results of this paper can be summarised as the following solvability statement complemented by the uniqueness and consistency in Theorems \ref{theo_consistency-1} and \ref{theo_consistency-2}.

\begin{thm}[Existence]
\label{theo_vws}
Let the coefficients $a$ and $q$ of the Cauchy problem \eqref{CPa} be positive distributions
with compact support included in $[0,T]$, such that $a\ge a_{0}$ for some constant $a_{0}>0$.
Let $s\in\mathbb R$ and let the Cauchy data $u_0, u_1$ be in ${H}^{s+1}_\H$.
Then the Cauchy problem \eqref{CPa} has a very weak solution of order $s$.

The same result is true also for the Cauchy problem \eqref{CPb}.
\end{thm}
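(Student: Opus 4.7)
The plan is to construct the very weak solution by regularising the distributional coefficients $a$ and $q$ by convolution with a Friedrichs mollifier, invoking Theorem \ref{theo_case_1} on the smoothed Cauchy problem, and then verifying moderateness by a frequency-by-frequency energy estimate with a carefully calibrated mollification scale $\omega(\eps)$.

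First I would fix a non-negative Friedrichs mollifier $\psi\in C^{\infty}_{0}(\R)$ with $\int\psi=1$ and set $a_{\eps}:=a*\psi_{\omega(\eps)}$ and $q_{\eps}:=q*\psi_{\omega(\eps)}$, where $\omega(\eps)\to 0^{+}$ is to be chosen. Positivity of $\psi$ is inherited by the convolution, so $a_{\eps}\ge a_{0}>0$ and $q_{\eps}\ge 0$ uniformly in $\eps$. The compact support of $a$ and $q$ together with the structure theorem for distributions produces an order $m\in\mathbb N_{0}$ and constants $C_{\alpha}$ with
$$
\sup_{t\in[0,T]}\bigl(|\partial_{t}^{\alpha}a_{\eps}(t)|+|\partial_{t}^{\alpha}q_{\eps}(t)|\bigr)\le C_{\alpha}\,\omega(\eps)^{-m-\alpha},\qquad \alpha\in\mathbb N_{0}.
$$
In particular $a_{\eps},q_{\eps}\in L_{1}^{\infty}([0,T])$, and Theorem \ref{theo_case_1} produces for each $\eps\in(0,1]$ a unique solution $u_{\eps}\in C([0,T];H^{s+1}_{\H})\cap C^{1}([0,T];H^{s}_{\H})$ of the regularised Cauchy problem.

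To extract the moderate bound I would expand $u_{\eps}$ in the eigenbasis $\{e^{j}_{\xi}\}$ from \eqref{eq:HamiltonianBasis}: since the scalars $a_{\eps}(t),q_{\eps}(t)$ commute with $\H$, each Fourier coefficient $\widehat u_{\eps}(t,\xi)$ corresponding to $\lambda_{\xi}=B+2B\xi_{2}$ satisfies the scalar ODE $\partial_{t}^{2}\widehat u_{\eps}+a_{\eps}(\lambda_{\xi}+q_{\eps})\widehat u_{\eps}=0$. With the natural energy
$$
E_{\xi}(t):=|\partial_{t}\widehat u_{\eps}(t,\xi)|^{2}+a_{\eps}(t)(\lambda_{\xi}+q_{\eps}(t))|\widehat u_{\eps}(t,\xi)|^{2},
$$
the cross terms cancel along the ODE, leaving $E_{\xi}'=\bigl(a_{\eps}'(\lambda_{\xi}+q_{\eps})+a_{\eps}q_{\eps}'\bigr)|\widehat u_{\eps}|^{2}$. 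Combining $|\widehat u_{\eps}|^{2}\le E_{\xi}/(a_{0}\lambda_{\xi})$ with $\lambda_{\xi}\ge B$ gives the uniform-in-$\xi$ differential inequality $E_{\xi}'(t)\le C\,\omega(\eps)^{-2m-1}E_{\xi}(t)$. The decisive choice is now $\omega(\eps):=(\log(e+\eps^{-1}))^{-1/(2m+1)}$: on the one hand $a_{\eps},q_{\eps}$ and all their derivatives grow only polylogarithmically in $\eps^{-1}$, so they are $C^{\infty}$-moderate; on the other hand the Gronwall factor $\exp(CT\,\omega(\eps)^{-(2m+1)})\le\eps^{-CT}$ is polynomial in $\eps^{-1}$. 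Weighting $E_{\xi}(t)\le\eps^{-N}E_{\xi}(0)$ by $\lambda_{\xi}^{s}$ and summing via \eqref{EQ:Sob1} then yields
$$
\|u_{\eps}(t)\|_{H^{s+1}_{\H}}^{2}+\|\partial_{t}u_{\eps}(t)\|_{H^{s}_{\H}}^{2}\le C\,\eps^{-N}\bigl(\|u_{0}\|_{H^{s+1}_{\H}}^{2}+\|u_{1}\|_{H^{s+1}_{\H}}^{2}\bigr).
$$

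Higher time derivatives are then handled by induction on $k$: differentiating the ODE expresses $\partial_{t}^{k}\widehat u_{\eps}$ as a sum of terms of the form $(\partial_{t}^{i}a_{\eps})(\partial_{t}^{j}q_{\eps})\lambda_{\xi}^{\ell}\partial_{t}^{p}\widehat u_{\eps}$ with moderate scalar factors, which together with the base energy bound and Plancherel yield $\|\partial_{t}^{k}u_{\eps}(t)\|_{H^{s}_{\H}}\le c_{k}\eps^{-N-k}$ as required by Definition \ref{def_vws}. The Cauchy problem \eqref{CPb} is treated verbatim after replacing the energy by $|\partial_{t}\widehat u_{\eps}|^{2}+(a_{\eps}\lambda_{\xi}+q_{\eps})|\widehat u_{\eps}|^{2}$, for which the same cross-term cancellation occurs. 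The main obstacle is precisely the calibration of $\omega(\eps)$: any polynomial scale $\omega(\eps)=\eps^{\kappa}$ gives $|a_{\eps}'|\sim\eps^{-\kappa(m+1)}$ and hence a Gronwall factor $\exp(CT\eps^{-\kappa(m+1)})$, which is \emph{not} moderate; the logarithmic scale above is the slowest decay still compatible with the structure theorem and is what converts exponential-in-$\eps^{-1}$ blowup into polynomial growth. A secondary subtlety is uniformity in the frequency $\xi$ of the differential inequality for $E_{\xi}$, secured by the strict lower bound $\lambda_{\xi}\ge B>0$ on the Landau spectrum, which prevents loss of control at low modes and lets all modes obey the same Gronwall bound.
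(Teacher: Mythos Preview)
Your proposal is correct and follows essentially the same route as the paper: regularise $a,q$ by Friedrichs mollifiers, secure the uniform lower bound $a_{\eps}\ge \tilde a_{0}>0$ from positivity, invoke Theorem~\ref{theo_case_1} on the regularised problem, run a mode-by-mode Gronwall energy argument (your $E_{\xi}$ is exactly the paper's symmetrised energy $(S_{\eps}V,V)$ with $V=(i\nu\widehat u,\partial_{t}\widehat u)$), and choose a logarithmic mollification scale so that the exponential Gronwall factor becomes polynomial in $\eps^{-1}$; higher $t$-derivatives are then obtained by iterated differentiation of the equation. Your calibration $\omega(\eps)=(\log(e+\eps^{-1}))^{-1/(2m+1)}$ is in fact sharper than the paper's looser prescription $\omega(\eps)\sim\log^{-1}(\eps)$, and your explicit observation that the spectral gap $\lambda_{\xi}\ge B>0$ is what gives $\xi$-uniformity in the differential inequality for $E_{\xi}$ makes precise a point the paper leaves implicit.
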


Since $s$ is allowed to be negative, the Cauchy data are allowed to be $\H$-distributions (i.e. elements of ${H}^{s}_\H$ with negative $s$).
In Theorem \ref{theo_consistency-1} we show that the very weak solution is unique in an appropriate sense.

But now let us formulate the theorem saying that very weak solutions recapture the classical solutions in the case the latter exist. This happens, for example, under conditions of Theorem \ref{theo_case_1}. So, we can compare the solution given by Theorem \ref{theo_case_1} with the very weak solution in Theorem \ref{theo_vws} under assumptions when Theorem \ref{theo_case_1} holds.

\begin{thm}[Consistency]
\label{theo_consistency-2}
Assume that $a, q\in L_{1}^{\infty}([0,T])$ are such that $a(t)\ge a_0>0$ and $q(t)\geq 0$.
Let $s\in\mathbb R$, and consider the Cauchy problem
\begin{equation}\label{Consistency:EQ:1-2}
\left\{ \begin{split}
\partial_{t}^{2}u(t,x)+a(t)[\H+q(t)] u(t,x)&=0, \; (t,x)\in [0,T]\times \mathbb R^{2},\\
u(0,x)&=u_{0}(x), \; x\in \mathbb R^{2}, \\
\partial_{t}u(0,x)&=u_{1}(x), \; x\in \mathbb R^{2},
\end{split}
\right.
\end{equation}
with $(u_0,u_1)\in {H}^{1+s}_\H \times {H}^{s}_\H$. Let $u$ be a very weak solution of
\eqref{Consistency:EQ:1-2}. Then for any regularising families $a_{\eps}, q_{\eps}$ in Definition \ref{def_vws}, any representative $(u_\eps)_\eps$ of $u$ converges in $C([0,T],{H}^{1+s}_\H) \cap
C^1([0,T],{H}^{s}_\H)$ as $\eps\rightarrow0$
to the unique classical solution in $C([0,T],{H}^{1+s}_\H) \cap
C^1([0,T],{H}^{s}_\H)$ of the Cauchy problem \eqref{Consistency:EQ:1-2}
given by Theorem \ref{theo_case_1}.

The same statement holds for \eqref{Consistency:EQ:1-2} replaced by
\eqref{CPb}.
\end{thm}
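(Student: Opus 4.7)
The plan is to compare, for fixed Cauchy data $(u_0,u_1)$, the representative $u_\eps$ of the very weak solution with the classical solution $u$ given by Theorem \ref{theo_case_1}. Since both solve wave-type equations with the same initial data but different coefficients, the difference $v_\eps := u_\eps - u$ satisfies the inhomogeneous Cauchy problem
\begin{equation*}
\partial_t^2 v_\eps + a_\eps(t)\bigl[\H + q_\eps(t)\bigr] v_\eps = -(a_\eps - a)(t)\,\H u(t,\cdot) - \bigl(a_\eps q_\eps - a q\bigr)(t)\,u(t,\cdot),
\end{equation*}
with zero Cauchy data. Because $a,q\in L_1^\infty([0,T])$ and the regularisations are convolutions $a_\eps=a\ast\psi_{\omega(\eps)}$, $q_\eps=q\ast\psi_{\omega(\eps)}$, Young's inequality yields uniform-in-$\eps$ bounds on $\|a_\eps\|_{L_1^\infty}$ and $\|q_\eps\|_{L_1^\infty}$, while the Lipschitz regularity of $a,q$ gives $\|a_\eps-a\|_{L^\infty}+\|q_\eps-q\|_{L^\infty}=O(\omega(\eps))\to 0$. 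Consequently Theorem \ref{theo_case_1} applied to the regularised problem yields a bound on $u_\eps$ in $C([0,T],H^{s+1}_\H)\cap C^1([0,T],H^s_\H)$ that is uniform in $\eps$.

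The main obstacle is a one-order loss of Sobolev regularity in any direct comparison, because $(a_\eps-a)\H u$ lies only in $H^{s-1}_\H$ when $u\in H^{s+1}_\H$. To bypass this I use a density argument. For any $\eta>0$ choose smoother Cauchy data $(u_0^{(k)},u_1^{(k)})\in H^{s+2}_\H\times H^{s+1}_\H$ with $\|(u_0-u_0^{(k)},u_1-u_1^{(k)})\|_{H^{s+1}_\H\times H^s_\H}<\eta$, and denote by $u^{(k)},u_\eps^{(k)}$ the corresponding classical and regularised solutions. Applying Theorem \ref{theo_case_1} with $s$ replaced by $s+1$ gives $u^{(k)}\in C(H^{s+2}_\H)\cap C^1(H^{s+1}_\H)$. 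Now expand in the eigenbasis $\{e_\xi^j\}$ of $\H$: each Fourier coefficient $\widehat{v_\eps^{(k)}}(\xi,t)$ of $v_\eps^{(k)}:=u_\eps^{(k)}-u^{(k)}$ satisfies the scalar ODE
\begin{equation*}
\partial_t^2 \widehat{v_\eps^{(k)}}(\xi,t)+\omega_\xi^\eps(t)\,\widehat{v_\eps^{(k)}}(\xi,t)=-\bigl(\omega_\xi^\eps(t)-\omega_\xi(t)\bigr)\widehat{u^{(k)}}(\xi,t),
\end{equation*}
where $\omega_\xi(t):=a(t)[\lambda_{\xi_2}+q(t)]$, $\omega_\xi^\eps(t):=a_\eps(t)[\lambda_{\xi_2}+q_\eps(t)]$, $\lambda_n=(2n+1)B$, with zero initial data. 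A Gronwall argument on the pointwise energy $|\partial_t\widehat{v_\eps^{(k)}}|^2+\omega_\xi^\eps|\widehat{v_\eps^{(k)}}|^2$, using the uniform bounds $|\partial_t\omega_\xi^\eps|/\omega_\xi^\eps\le C$ and $|\omega_\xi^\eps-\omega_\xi|\le C(\|a_\eps-a\|_{L^\infty}\lambda_{\xi_2}+\|q_\eps-q\|_{L^\infty})$, followed by Plancherel summation against $\lambda_{\xi_2}^{s+1}$ as in \eqref{EQ:Sob1}, produces
\begin{equation*}
\|v_\eps^{(k)}\|_{C(H^{s+1}_\H)}+\|\partial_t v_\eps^{(k)}\|_{C(H^s_\H)}\le C\bigl(\|a_\eps-a\|_{L^\infty}+\|q_\eps-q\|_{L^\infty}\bigr)\bigl(\|u_0^{(k)}\|_{H^{s+2}_\H}+\|u_1^{(k)}\|_{H^{s+1}_\H}\bigr),
\end{equation*}
which vanishes as $\eps\to 0$ for fixed $k$.

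Finally, decompose $u_\eps-u=(u_\eps-u_\eps^{(k)})+v_\eps^{(k)}+(u^{(k)}-u)$. The first and third terms solve homogeneous wave problems (with regularised and original coefficients respectively) whose initial data differ by at most $\eta$ in $H^{s+1}_\H\times H^s_\H$; Theorem \ref{theo_case_1} (with $\eps$-uniform constants thanks to the $L_1^\infty$ bounds above) controls each of them in $C(H^{s+1}_\H)\cap C^1(H^s_\H)$ by $C\eta$. The middle term tends to $0$ as $\eps\to 0$ by the previous step. Taking $\eps\to 0$ first and then $\eta\to 0$ yields $u_\eps\to u$ in $C([0,T],H^{s+1}_\H)\cap C^1([0,T],H^s_\H)$. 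The argument for the Cauchy problem \eqref{CPb} is identical, since the only structural difference (whether $q$ multiplies $\H u$ or appears as an additive potential) does not affect any of the coefficient bounds or mode-by-mode estimates used above.
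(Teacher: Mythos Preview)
Your proof is correct and, in one important respect, more careful than the paper's own argument. Both proofs set up the same comparison: the difference $u_\eps-\widetilde u$ (with $\widetilde u$ the classical solution) solves an inhomogeneous regularised wave equation with zero Cauchy data and source $n_\eps=(a_\eps-a)\H\widetilde u+(a_\eps q_\eps-aq)\widetilde u$, and both appeal to the energy/symmetriser machinery of Theorem~\ref{theo_case_1} together with $a_\eps\to a$, $q_\eps\to q$ in $L^\infty$ to drive the source to zero.

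The genuine difference is how the loss of one Sobolev order is handled. The paper simply asserts that $n_\eps\in C([0,T];H^s_\H)$ and tends to zero there, and then runs the energy estimate; but since $\widetilde u$ is only known to lie in $C([0,T];H^{s+1}_\H)$, the term $(a_\eps-a)\H\widetilde u$ is a~priori only in $H^{s-1}_\H$, so that claim is not justified as stated. You spotted this and repaired it with a standard density step: approximate the data in $H^{s+2}_\H\times H^{s+1}_\H$, obtain the comparison estimate at the higher regularity (where the $\lambda_\xi$-factor from $\H$ is absorbed), and close by the triangle inequality using the $\eps$-uniform well-posedness constants coming from the uniform $L_1^\infty$ bounds on $a_\eps,q_\eps$. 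This buys you a fully rigorous argument at the stated regularity $(u_0,u_1)\in H^{s+1}_\H\times H^s_\H$, at the cost of one extra (routine) approximation layer. The paper's route is shorter but, read literally, leaves that regularity point unaddressed.
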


Here the very weak solution is understood according to Definition \ref{def_vws}.
We now proceed with preparation for proving theorems in this section.

\section{Fourier analysis for the Landau Hamiltonian}
\label{SEC:Prelim}

In this section we recall the necessary elements of the global Fourier analysis
that has been developed in \cite{RT16} applied to the present setting.
Although the domain $\mathbb R^{2}$
in our setting is unbounded, the following constructions carry over without any significant changes.
Moreover, there is a significant simplification since the appearing Fourier analysis is
self-adjoint.
A more general version of these constructions under weaker conditions can be found in \cite{RT16a}.
For application of the general non-self-adjoint analysis to the spectral analysis we refer to \cite{DRT16}.

The space $C_{\H}^{\infty}(\mathbb R^{2}):={\rm Dom}({\H}^{\infty})$ is called the space of test functions for ${\H}$, where we define
$$
{\rm Dom}({\H}^{\infty}):=\bigcap_{k=1}^{\infty}{\rm Dom}({\H}^{k}),
$$
where ${\rm Dom}({\H}^{k})$ is the domain of the operator ${\H}^{k}$, in turn defined as
$$
{\rm Dom}({\H}^{k}):=\{f\in L^{2}(\mathbb R^{2}): \,\,\, {\H}^{j}f\in {\rm Dom}({\H}), \,\,\, j=0, \,1, \, 2, \ldots,
k-1\}.
$$
The Fr\'echet topology of $C_{{\H}}^{\infty}(\mathbb R^{2})$ is given by the family of norms
\begin{equation}\label{EQ:L-top}
\|\varphi\|_{C^{k}_{{\H}}}:=\max_{j\leq k}
\|{\H}^{j}\varphi\|_{L^2(\mathbb R^{2})}, \quad k\in\mathbb N_0,
\; \varphi\in C_{{\H}}^{\infty}(\mathbb R^{2}).
\end{equation}
The space of
${\H}$-distributions
$$\mathcal D'_{\H}(\mathbb R^{2}):=\mathcal L(C_{\H}^{\infty}(\mathbb R^{2}),
\mathbb C)$$ is the space of all linear continuous functionals on
$C_{\H}^{\infty}(\mathbb R^{2})$. For
$w\in\mathcal D'_{\H}(\mathbb R^{2})$ and $\varphi\in C_{\H}^{\infty}(\mathbb R^{2})$,
we shall write
$$
w(\varphi)=\langle w, \varphi\rangle.
$$
For any $\psi\in C_{{\H}}^{\infty}(\mathbb R^{2})$, the functional
$$
C_{\H}^{\infty}(\mathbb R^{2})\ni \varphi\mapsto\int_{\mathbb R^{2}}{\psi(x)} \, \varphi(x)\, dx
$$
is an ${\H}$-distribution, which gives an embedding $\psi\in
C_{{\H}}^{\infty}(\mathbb R^{2})\hookrightarrow\mathcal D'_{\H}(\mathbb R^{2})$.

Taking into account the fact that the eigenfunctions of the Landau Hamiltonian in \eqref{eq:HamiltonianBasis} come in pairs, it will be convenient to group them together in the way
suggested by the notation \eqref{EQ:eks}. This leads to the following definitions.
Let $\mathcal S(\N)$ denote the space of rapidly decaying
functions $\varphi:\N\rightarrow\mathbb C^{2\times2}$ of the form
$$
\varphi:=
\p{\begin{matrix} \varphi_{11}  & 0 {}\\
0 &\varphi_{22}
\end{matrix}}.
$$

That is,
$\varphi\in\mathcal S(\N)$ if for any $M<\infty$ there
exists a constant $C_{\varphi, M}$ such that
$$
|\varphi(\xi)|\leq C_{\varphi, M}\langle\xi\rangle^{-M}
$$
holds for all $\xi\in\N$, where we denote
$$\langle\xi\rangle:=|\sqrt{\lambda_{\xi_{2}}}|=|\sqrt{(2\xi_{2}+1)B}|.$$

The topology on $\mathcal
S(\N)$ is given by the seminorms $p_{k}$, where
$k\in\mathbb N_{0}$ and
$$
p_{k}(\varphi):=\sup_{\xi\in\N}\langle\xi\rangle^{k}|\varphi(\xi)|.
$$

We now define the $\H$-Fourier transform on $C_{\H}^{\infty}(\mathbb R^{2})$ as the mapping
$$
(\mathcal F_{\H}f)(\xi)=(f\mapsto\widehat{f}):
C_{\H}^{\infty}(\mathbb R^{2})\rightarrow\mathcal S(\N)
$$
by the formula
\begin{equation}
\label{FourierTr}
\widehat{f}(\xi):=(\mathcal F_{\H}f)(\xi)=\int_{\mathbb R^{2}}f(x)\overline{e_{\xi}(x)}dx,
\end{equation}
where
$$
e_{\xi}(x)=\p{\begin{matrix} e^{1}_{\xi}(x)  & 0 {}\\
0 & e^{2}_{\xi}(x)
\end{matrix}}.
$$

The $\H$-Fourier transform
$\mathcal F_{\H}$ is a bijective homeomorphism from $C_{{\H}}^{\infty}(\mathbb R^{2})$ to $\mathcal S(\N)$.
Its inverse  $$\mathcal F_{\H}^{-1}: \mathcal S(\N)
\rightarrow C_{\H}^{\infty}(\mathbb R^{2})$$ is given by
\begin{equation}
\label{InvFourierTr} (\mathcal F^{-1}_{{\H}}h)(x)=\sum_{\xi \in \N}\Tr\p{h(\xi)e_{\xi}(x)},\quad h\in\mathcal S(\N),
\end{equation}
so that the Fourier inversion formula becomes
\begin{equation}
\label{InvFourierTr0}
f(x)=\sum_{\xi\in\N}\Tr\p{\widehat{f}(\xi)e_{\xi}(x)}
\quad \textrm{ for all } f\in C_{{\H}}^{\infty}(\mathbb R^{2}).
\end{equation}

The Plancherel identity takes the form
\begin{equation}\label{EQ:Plancherel}
\|f\|_{L^{2}(\mathbb R^{2})}=\p{\sum_{\xi\in\N}
\|\widehat{f}(\xi)\|_{\HS}^{2}}^{1/2}=:
\|\widehat{f}\|_{\ell^{2}(\N)},
\end{equation}
which we can take as the definition of the norm on the Hilbert space
$\ell^{2}(\N)$, and where
$ \|\widehat{f}(\xi)\|_{\HS}^{2}=\Tr(\widehat{f}(\xi)\overline{\widehat{f}(\xi))}$ is the
Hilbert--Schmidt norm of the matrix $\widehat{f}(\xi)$.

One can readily check that test functions and distributions on $\mathbb R^{2}$ can be characterised in terms of
their Fourier coefficients. Thus, we have
$$
f\in C^{\infty}_{\H}(\mathbb R^{2})\Longleftrightarrow
\forall N \;\exists C_{N} \textrm{ such that }
\|\widehat{f}(\xi)\|_{\HS}\leq C_{N} \jp{\xi}^{-N}
\textrm{ for all } \xi\in\N.
$$
Also, for distributions, we have
$$
u\in \Dcal'_{\H}(\mathbb R^{2})
\Longleftrightarrow
\exists M \;\exists C \textrm{ such that }
\|\widehat{u}(\xi)\|_{\HS} \leq C\jp{\xi}^{M}
\textrm{ for all } \xi\in\N .
$$

In general, given a linear continuous operator $L:C^{\infty}_{\H}(\mathbb R^{2})\to C^{\infty}_{\H}(\mathbb R^{2})$
(or even $L:C^{\infty}_{\H}(\mathbb R^{2})\to \Dcal'_{\H}(\mathbb R^{2})$), we can define its matrix symbol by
$\sigma_{L}(x,\xi):=e_{\xi}(x)^{-1} (L e_\xi)(x)\in \C^{2\times2}$, where
$L e_\xi$ means that we apply $L$ to the matrix components of $e_\xi(x)$, provided that
$e_{\xi}(x)$ is invertible in a suitable sense.
In this case we may prove that
\begin{equation}\label{EQ:T-op}
Lf(x)=\sum_{\xi\in\N} \Tr\p{e_\xi(x)\sigma_{L}(x,\xi)\widehat{f}(\xi)}.
\end{equation}
The correspondence between operators and symbols is one-to-one.
The quantization \eqref{EQ:T-op} has been extensively studied in
\cite{Ruzhansky-Turunen:BOOK,Ruzhansky-Turunen:IMRN} in the setting of compact Lie groups, and in \cite{RT16} in the setting of (non-self-adjoint) boundary value problems, to which we
may refer for its properties and for the corresponding symbolic calculus.

However, the situation with the Landau Hamiltonian is now much simpler since this operator can be treated as an `invariant' operator in the corresponding global calculus.
The operator $\H$ acts as a Fourier multiplier in its own Fourier calculus, therefore its symbol
$\sigma_\H(\xi)$ is independent of $x$, and since $\H$ is formally self-adjoint and positive
we can always write it in the form
\begin{equation}\label{EQ:subL-symbol}
\sigma_{\H}(\xi)=
\left(\begin{matrix}
\nu_1^2(\xi) &  0\\
0  & \nu_2^2(\xi)
\end{matrix}\right),
\end{equation}
for some $\nu_j(\xi)\geq 0$. Indeed, we have
$\nu_{j}^{2}(\xi)=B(1+2\xi_{2})$ for $j=1,2$.

Consequently, we can also define Sobolev spaces $H^s_\H$ associated to
$\H$. Thus, for any $s\in\Rr$, we set
\begin{equation}\label{EQ:HsL}
H^s_\H:=\left\{ f\in\Dcal'_{\H}(\mathbb R^{2}): \H^{s/2}f\in
L^2(\mathbb R^{2})\right\},
\end{equation}
with the norm $\|f\|_{H^s_\H}:=\|\H^{s/2}f\|_{L^2}.$ Using
Plancherel's identity \eqref{EQ:Plancherel}, we can write
\begin{multline}\label{EQ:Hsub-norm}
\|f\|_{H^s_\H}=\|\H^{s/2}f\|_{L^2}=\p{\sum_{\xi\in\N} \|\sigma_\H(\xi)^{s/2}\widehat{f}(\xi)\|_\HS^2}^{1/2} \\
\,\,\,\,\,\,\,\,\,\,\,\,\,\,\,\,\,\,\,\,\,\,\,\,\,\,\,\,\,\,\,\,\,\,
= \p{ \sum_{\xi\in\N}  (B+2B\xi_{2})^{s} \sum_{j=1}^{2}
|\widehat{f}(\xi)_{jj}|^2  }^{1/2}\\
= \p{ \sum_{\xi\in\N}
(B+2B\xi_{2})^{s} \sum_{j=1}^{2} \left| \int_{\mathbb
R^{2}}f(x)\overline{e^{j}_{\xi}(x)}dx\right|^2 }^{1/2},
\end{multline}
justifying the expression \eqref{EQ:Sob1}.

\section{Proof of Theorem \ref{theo_case_1}}
\label{SEC:reduction}

We will prove the result for the Cauchy problem \eqref{CPa} since equation \eqref{CPb} can be treated by the same argument with minor modification.

The operator $\H$  has the symbol \eqref{EQ:subL-symbol}, which we can write
in matrix components as
$$
\sigma_{\H}(\xi)_{mk}=(B+2B\xi_{2})\delta_{mk}, \quad 1\leq m,k\leq
2,
$$
with $\delta_{mk}$ standing for the Kronecker's delta.
Taking the $\H$-Fourier transform of
\eqref{CPa}, we obtain the collection of Cauchy problems for
matrix-valued Fourier coefficients:
\begin{equation}\label{CPa-FC}
\partial_{t}^{2}\widehat{u}(t,\xi)+a(t)[\sigma_{\H}(\xi)+q(t)\textrm{I}]\widehat{u}(t,\xi)=0,
\quad \xi\in\N,
\end{equation}
where $\textrm{I}$ is the identity $2\times2$ matrix.
Writing this in the matrix form, we see that this is equivalent to the system
$$
\partial_{t}^{2} \widehat{u}(t,\xi)+
a(t) \left(\begin{matrix}
(q(t)+B+2B\xi_{2}) &  0 \\
0  &  (q(t)+B+2B\xi_{2})
\end{matrix}\right) \widehat{u}(t,\xi)=0.
$$
Rewriting \eqref{CPa-FC} in terms of matrix coefficients
$\widehat{u}(t,\xi)=\left(\widehat{u}(t,\xi)_{mk}\right)_{1\leq m,k\leq 2}$,
we get the equations
\begin{equation}\label{EQ:WE-scalars}
\partial_{t}^{2} \widehat{u}(t,\xi)_{mk}+ a(t) (q(t)+B+2B\xi_{2})
 \widehat{u}(t,\xi)_{mk}=0,\qquad \xi\in\N,\;
1\leq m,k\leq 2.
\end{equation}
The main point of our further analysis is that we can make an individual
treatment of the equations in \eqref{EQ:WE-scalars} and then collect the estimates together using the $\H$-Plancherel theorem.

Thus, let us fix $\xi\in\N$ and $m,k$ with $1\leq m,k\leq 2$,
and let us denote
$$\widehat{v}(t,\xi):=\widehat{u}(t,\xi)_{mk}.$$
We then study the Cauchy problem
\begin{equation}\label{EQ:WE-v}
\partial_{t}^{2} \widehat{v}(t,\xi)+ a(t) (q(t)+B+2B\xi_{2})
 \widehat{v}(t,\xi)=0,\,\,\,
 \widehat{v}(t,\xi)=\widehat{v}_{0}(\xi), \;
 \partial_{t}\widehat{v}(t,\xi)=\widehat{v}_{1}(\xi),
\end{equation}
with $\xi,m$ being parameters, and want to derive estimates
for $\widehat{v}(t,\xi)$. Combined with the characterisation \eqref{EQ:Hsub-norm} of
Sobolev spaces this will yield the well--posedness
results for the original Cauchy problem \eqref{CPa}.

In the sequel, for fixed $m$, we set
\begin{equation}\label{xi_l}
\nu^{2}(\xi):=(B+2B\xi_{2}).
\end{equation}
Hence, the equation in \eqref{EQ:WE-v} can be written as
\begin{equation}
\label{eq_xi}
\partial_{t}^{2} \widehat{v}(t,\xi)+a(t)\nu^{2}(\xi)\left[1+\frac{q(t)}{\nu^{2}(\xi)}\right]\widehat{v}(t,\xi)=0.
\end{equation}
We now proceed
with a standard reduction to a first order system of this equation and define the corresponding energy.
The energy estimates will be given in terms of $t$ and $\nu(\xi)$ and we then go back to $t$,
$\xi$ and $m$ by using \eqref{xi_l}.

We can now do the natural energy construction for \eqref{eq_xi}. We use the transformation
\[
\begin{split}
V_1&:=i\nu(\xi)\widehat{v},\\
V_2&:= \partial_t \widehat{v}.
\end{split}
\]
It follows that the equation \eqref{eq_xi} can be written as the first order system
\begin{equation}\label{EQ:system}
\partial_t V(t,\xi)=i\nu(\xi) A(t,\xi)V(t,\xi),
\end{equation}
where $V$ is the column vector with entries $V_1$ and $V_2$ and
$$
A(t,\xi)=\left(
    \begin{array}{cc}
      0 & 1\\
      a(t)\left[1+\frac{q(t)}{\nu^{2}(\xi)}\right] & 0 \\
           \end{array}
  \right).
$$
The initial conditions $\widehat{v}(0,\xi)=\widehat{v}_{0}(\xi)$, $\partial_{t}\widehat{v}(0,\xi)=\widehat{v}_{1}(\xi)$
are transformed into
$$
V(0,\xi)=\left(
    \begin{array}{c}
      i\nu(\xi) \widehat{v}_0(\xi)\\
      \widehat{v}_{1}(\xi)
     \end{array}
  \right).
$$

Note that the matrix $A$ has eigenvalues $\pm\sqrt{a(t)\left[1+\frac{q(t)}{\nu^{2}(\xi)}\right]}$ and its symmetriser is given by
\begin{equation}\label{EQ:Sdef}
S(t,\xi)=\left(
    \begin{array}{cc}
      a(t)\left[1+\frac{q(t)}{\nu^{2}(\xi)}\right] & 0\\
      0 & 1 \\
           \end{array}
  \right),
\end{equation}
i.e. we have
\[
SA-A^\ast S=0.
\]
It is immediate to prove that
\begin{equation}
\label{est_sym}
\min_{t\in[0,T]}(a(t)\left[1+\frac{q(t)}{\nu^{2}(\xi)}\right],1)|V|^2\le (SV,V)\le \max_{t\in[0,T]}(a(t)\left[1+\frac{q(t)}{\nu^{2}(\xi)}\right],1)|V|^2,
\end{equation}
where $(\cdot,\cdot)$ and $|\cdot|$ denote the inner product and the norm in $\C$, respectively.

Since $a(t)>a_0\geq0$, $q(t)\geq 0$, and $a,q\in C([0,T])$, it is clear that there exist constants $a_1>0$ and $a_2>0$ such that
$$
a_1=\min_{t\in[0,T]}a(t)\left[1+\frac{q(t)}{\nu^{2}(\xi)}\right]
\; \textrm{ and } \;
a_2=\max_{t\in[0,T]}{a(t)}\left[1+\frac{q(t)}{\nu^{2}(\xi)}\right].
$$
Hence \eqref{est_sym} implies that
\begin{equation}
\label{est_sym_1}
c_1|V|^2=\min(a_0,1)|V|^2\le (SV,V)\le \max(a_1,1)|V|^2=c_2|V|^2,
\end{equation}
with $c_1,c_2>0$.
We then define the energy
$$E(t,\xi):=(S(t,\xi)V(t,\xi),V(t,\xi)).$$
We get, from \eqref{est_sym_1}, that
\begin{align*}
\partial_t E(t,\xi)&=(\partial_t S(t,\xi)V(t,\xi),V(t,\xi))+(S(t,\xi)\partial_t V(t,\xi),V(t,\xi))\\
&\,\,\,\,\,\,\,\,\,\,\,\,\,\,\,\,\,\,\,\,\,\,\,\,\,\,\,\,\,\,\,\,\,\,\,\,\,\,\,\,\,\,\,\,\,\,\,\,\,\,\,\,\,\,\,\,\,\,\,\,\,\,\,\,
+(S(t,\xi)V(t,\xi),\partial_t V(t,\xi))\\
&=(\partial_t S(t,\xi)V(t,\xi),V(t,\xi))+i \nu(\xi) (S(t,\xi)A(t,\xi)V(t,\xi),V(t,\xi))\\
&\,\,\,\,\,\,\,\,\,\,\,\,\,\,\,\,\,\,\,\,\,\,\,\,\,\,\,\,\,\,\,\,\,\,\,\,\,\,\,\,\,\,\,\,\,\,\,\,\,\,\,\,\,\,\,\,\,\,\,\,\,\,\,\,
-i \nu(\xi) (S(t,\xi)V(t,\xi),A(t,\xi)V(t,\xi))\\
&=(\partial_t S(t,\xi)V(t,\xi),V(t,\xi))+i \nu(\xi) ((SA-A^\ast S)(t,\xi)V(t,\xi),V(t,\xi))\\
&=(\partial_t S(t,\xi)V(t,\xi),V(t,\xi))\\
&\le \Vert \partial_t S\Vert |V(t,\xi)|^2.
\end{align*}
Since $a(t)\left[1+\frac{q(t)}{\nu^{2}(\xi)}\right]$ is bounded on $[0, T]$ and for all $\xi$,
we obtain
\begin{equation}
\label{E_1}
\partial_t E(t,\xi)\le c' E(t,\xi),
\end{equation}
for some constant $c'>0$.
A part of the subsequent application of the Gronwall's lemma is standard (see e.g. \cite{M08}) but we give it for completeness and clarity.
By Gronwall's lemma applied to inequality
\eqref{E_1} we conclude that for all $T>0$ there exists $c>0$ such that
\[
E(t,\xi)\le c E(0,\xi).
\]
Hence, inequalities \eqref{est_sym_1} yield
\[
c_0|V(t,\xi)|^2\le E(t,\xi)\le c E(0,\xi)\le cc_1|V(0,\xi)|^2,
\]
for constants independent of $t\in[0,T]$ and $\xi$. This allows us to write the following statement:
there exists a constant $C_1>0$ such that
\begin{equation}
\label{case_1_est}
|V(t,\xi)|\le C_1 |V(0,\xi)|,
\end{equation}
for all $t\in[0,T]$ and $\xi$. Hence
$$
\nu^2(\xi) |\widehat{v}(t,\xi)|^2+|\partial_t \widehat{v}(t,\xi)|^2
\le C_1'( \nu^2(\xi) |\widehat{v}_0(\xi)|^2+|\widehat{v}_1(\xi)|^2).
$$
Recalling the notation
$\widehat{v}(t,\xi)=\widehat{u}(t,\xi)_{mk}$ and
$\nu^2(\xi)=(B+2B\xi_{2})$, this means
\begin{equation}
\label{case_1_est_mn} (B+2B\xi_{2})
|\widehat{u}(t,\xi)_{mk}|^2+|\partial_t \widehat{u}(t,\xi)_{mk}|^2
\le C_1'(B+2B\xi_{2})
|\widehat{u}_0(\xi)_{mk}|^2+|\widehat{u}_1(\xi)_{mk}|^2)
\end{equation}
for all $t\in[0,T]$, $\xi\in\N$ and $1\le m,k\le 2$, with the
constant $C_1'$ independent of $\xi$, $m,k$.
Now we recall that by Plancherel's equality, we have
$$
\|\partial_t u(t,\cdot)\|_{L^2}^2=\sum_{\xi\in\N}\|\partial_t \widehat{u}(t,\xi)\|_\HS^2=
\sum_{\xi\in\N} \sum_{m,k=1}^2 |\partial_t \widehat{u}(t,\xi)_{mk}|^2
$$
and
$$
\|\H^{1/2} u(t,\cdot)\|_{L^2}^2=\sum_{\xi\in\N}   \| \sigma_\H(\xi)^{1/2}
\widehat{u}(t,\xi)\|_\HS^2= \sum_{\xi\in\N} \sum_{m,k=1}^2
(B+2B\xi_{2}) |\widehat{u}(t,\xi)_{mk}|^2.
$$
Hence, the estimate \eqref{case_1_est_mn} implies that
\begin{equation}
\label{case_1_last}
\|\H^{1/2} u(t,\cdot)\|_{L^2}^2+\|\partial_t u(t,\cdot)\|_{L^2}^2\leq
C (\|\H^{1/2} u_0\|_{L^2}^2+\|u_1\|_{L^2}^2),
\end{equation}
where the constant $C>0$ does not depend on $t\in[0,T]$. More
generally, multiplying \eqref{case_1_est_mn} by powers of
$(B+2B\xi_{2})$, for any $s$, we get
\begin{multline}
\label{case_1_est_mn2} (B+2B\xi_{2})^{1+s}
|\widehat{u}(t,\xi)_{mk}|^2+(B+2B\xi_{2})^{s}  |\partial_t
\widehat{u}(t,\xi)_{mk}|^2 \\
\le C_1'(B+2B\xi_{2})^{1+s}
|\widehat{u}_0(\xi)_{mk}|^2+(B+2B\xi_{2})^{s}
|\widehat{u}_1(\xi)_{mk}|^2).
\end{multline}
Taking the sum over $\xi$, $m$ and $k$ as above, this yields
the estimate \eqref{case_1_last-est}.

\section{Proof of Theorem \ref{theo_vws}}
\label{SEC:pf2}

Again, in this section we deal with the Cauchy problem \eqref{CPa} and the proof for equation \eqref{CPb} can be done by minor modifications.

We now assume that the equation coefficients are distributions with compact support contained in $[0,T]$. Since the formulation of  \eqref{CPa}  in this case might be impossible in the distributional sense due to issues related to the product of distributions, we replace \eqref{CPa} with a regularised equation. In other words, we regularise $a, q$ by convolution with a mollifier in $C^\infty_0(\mathbb R)$ and get nets of smooth functions as coefficients. More precisely, let $\psi\in C^\infty_0(\mathbb R)$, $\psi\ge 0$ with $\int\psi=1$, and let $\omega(\eps)$ be a positive function converging to $0$ as $\eps\to 0$, with the rate of convergence to be specified later. Define
$$
\psi_{\omega(\eps)}(t):=\frac{1}{\omega(\eps)}\psi\left(\frac{t}{\omega(\eps)}\right),
$$
$$
a_{\eps}(t):=(a\ast \psi_{\omega(\eps)})(t), \; q_{\eps}(t):=(q\ast \psi_{\omega(\eps)})(t), \qquad t\in[0,T].
$$
Since $a$ is a positive distribution with compact support (hence a Radon measure) and $\psi\in C^\infty_0(\mathbb R)$, $\textrm{supp}\,\psi\subset\textsc{K}$, $\psi\ge 0$,
identifying the measure $a$ with its density, we can write
\begin{align*}
a_{\eps}(t)&=(a\ast \psi_{\omega(\eps)})(t)=\int\limits_{\mathbb R}a(t-\tau)\psi_{\omega(\eps)}(\tau)d\tau=\int\limits_{\mathbb R}a(t-\omega(\eps)\tau)\psi(\tau)d\tau \\
&=\int\limits_{\textsc{K}}a(t-\omega(\eps)\tau)\psi(\tau)d\tau\geq a_{0} \int\limits_{\textsc{K}}\psi(\tau)d\tau:=\tilde a_{0}>0,
\end{align*}
with a positive constant $\tilde a_{0}>0$ independent of $\eps$.

By the structure theorem for compactly supported distributions, we have that there exist $L_{1}, L_{2}\in\mathbb N$ and $c_{1}, c_{2}>0$ such that
\begin{equation}\label{EQ: a-q-C-moderate}
|\partial^{k}_{t}a_{\eps}(t)|\le c_{1}\,\omega(\eps)^{-L_{1}-k}, \,\,\, \,\,\, \,\,\, |\partial^{k}_{t}q_{\eps}(t)|\le c_{2}\,\omega(\eps)^{-L_{2}-k},
\end{equation}
for all $k\in\mathbb N_{0}$ and $t\in[0,T]$. We note that the numbers $L_{1}$ and $L_{2}$ may be related to the distributional orders of $a$ and $q$ but we will not be needing such a relation in our proof.

Hence, $a_{\eps}, q_{\eps}$ are $C^\infty$--moderate regularisations of the coefficients $a, q$.
Now, fix $\eps\in(0,1]$, and consider the regularised problem
\begin{equation}\label{PrTh2:EQ:1}
\left\{ \begin{split}
\partial_{t}^{2}u_{\eps}(t,x)+a_{\eps}(t)[\H+q_{\eps}(t)] u_{\eps}(t,x)&=0, \; (t,x)\in [0,T]\times \mathbb R^{2},\\
u_{\eps}(0,x)&=u_{0}(x), \; x\in \mathbb R^{2}, \\
\partial_{t}u_{\eps}(0,x)&=u_{1}(x), \; x\in \mathbb R^{2},
\end{split}
\right.
\end{equation}
with the Cauchy data satisfy
$(u_0,u_1)\in {H}^{1+s}_\H \times {H}^{s}_\H$ and $a_{\eps}\in C^\infty[0, T]$.
Then all discussions and calculations of Theorem \ref{theo_case_1} are valid. Thus by Theorem \ref{theo_case_1} the equation \eqref{PrTh2:EQ:1} has a unique solution in the space $C^{0}([0,T];{H}^{1+s}_\H)\cap C^{1}([0,T];{H}^{s}_\H)$. In fact, this unique solution is from $C^\infty([0,T];{H}^{s}_\H)$. This can be checked by taking in account that $a_{\eps}, q_{\eps}\in C^\infty([0,T])$ and by differentiating both sides of the equation \eqref{PrTh2:EQ:1} in $t$ inductively. Applying Theorem \ref{theo_case_1} to the equation \eqref{PrTh2:EQ:1},
using the inequality
$$
\Vert \partial_{t} S_{\eps}(t,\xi) \Vert \leq C(|\partial_{t} a_{\eps}(t)| |q_{\eps}(t)|+| a_{\eps}(t)| |\partial_{t} q_{\eps}(t)|) \leq C \omega(\eps)^{-L_{1}-L_{2}-1},
$$
with $S_{\eps}$ corresponding to \eqref{EQ:Sdef},
and Gronwall's lemma, we get the estimate
\begin{equation}\label{ES: exp-1}
\|u_{\eps}(t,\cdot)\|_{{H}^{1+s}_\H}^2+\| \partial_t u_{\eps}(t,\cdot)\|_{{H}^s_\H}^2\leq
C \exp(c\,\omega(\eps)^{-L_{1}-L_{2}-1}T) (\| u_0\|_{{H}^{1+s}_\H}^2+\|u_1\|_{{H}^{s}_\H}^2),
\end{equation}
where the coefficients $L_{1}$ and $L_{2}$ are from \eqref{EQ: a-q-C-moderate}.

Put $\omega(\eps)\sim\log^{-1}(\eps)$. Then the estimate \eqref{ES: exp-1} transforms to
$$
\|u_{\eps}(t,\cdot)\|_{{H}^{1+s}_\H}^2+\| \partial_t u_{\eps}(t,\cdot)\|_{{H}^s_\H}^2\leq
C \eps^{-L_{1}-L_{2}-1} (\| u_0\|_{{H}^{1+s}_\H}^2+\|u_1\|_{{H}^{s}_\H}^2),
$$
with possibly new constants $L_{1}, L_{2}$. To simplify the notation we continue denoting them by the same letters.

Now, let us show that there exist $N\in\mathbb N_{0}$, $c>0$ and, for all $k\in\mathbb N_{0}$ there exist $N_k>0$ and $c_k>0$ such that
$$
\|\partial_t^k u_\eps(t,\cdot)\|_{{H}^{s}_\H}\le c_k \eps^{-N-k},
$$
for all $t\in[0,T]$, and $\eps\in(0,1]$.

Applying \eqref{est_sym_1} and \eqref{E_1} to the problem with $a_{\eps}$ and $q_{\eps}$, and by
taking account the properties of $a_{\eps}$ and $q_{\eps}$, we get
\begin{align*}
(B+2B\xi_{2})
|\widehat{u_\eps}(t,\xi)_{mk}|^2 &+|\partial_t
\widehat{u_\eps}(t,\xi)_{mk}|^2 \\
& \le C \eps^{-L_{1}-L_{2}-1}((B+2B\xi_{2})
|\widehat{u}_0(\xi)_{mk}|^2+|\widehat{u}_1(\xi)_{mk}|^2)
\end{align*}
for all $t\in[0,T]$, $\xi\in\N$ and $1\le m,k\le 2$, with the
constant $C$ independent of $\xi$, $m,k$. Thus, we obtain
$$
\|\partial_t u_\eps(t,\cdot)\|_{{H}^{s}_\H} \le C \eps^{-L_{1}-L_{2}-1}, \,\,\, \| u_\eps(t,\cdot)\|_{{H}^{s+1}_\H}\le
C \eps^{-L_{1}-L_{2}}.
$$
Acting by the iterations of $\partial_{t}$ and by $\H$ on the
equality
$$
\partial_{t}^{2}u_{\eps}(t,x)=a_{\eps}(t)[\H+q_{\eps}(t)] u_{\eps}(t,x),
$$
and taking it in $L^{2}$--norms, we conclude that $u_{\eps}$ is
$C^\infty([0,T];{H}^{s}_\H)$-moderate.

This shows that the Cauchy problem \eqref{CPa} has a very weak solution.

\section{Consistency with the classical well-posedness}
\label{SEC:cons}

In this section we show that when the coefficients are
regular enough then the very weak solution coincides with the
classical one: this is the content of Theorem \ref{theo_consistency-2} which we will prove here.

Moreover, we show that the very weak solution provided by Theorem \ref{theo_vws}
is unique in an appropriate sense. For formulating the uniqueness statement it will be convenient to use the language of Colombeau algebras.

\begin{defi}
\label{def_negl_net} We say that $(u_\eps)_\eps$ is
\emph{$C^\infty$-negligible} if for all $K\Subset\mathbb R$, for
all $\alpha\in\mathbb N$ and for all $\ell\in\mathbb N$ there exists
a constant $c>0$ such that
$$
\sup_{t\in K}|\partial^\alpha u_\eps(t)|\le c\eps^{\ell},
$$
for all $\eps\in(0,1]$.
\end{defi}

We now introduce the Colombeau algebra as the quotient
$$
\mathcal G(\mathbb R)=\frac{C^\infty-\text{moderate\,
nets}}{C^\infty-\text{negligible\, nets}}.
$$
For the general analysis of $\mathcal G(\mathbb R)$ we refer to
e.g. Oberguggenberger \cite{Oberguggenberger:Bk-1992}.

\begin{thm}[Uniqueness]
\label{theo_consistency-1}
Let $a$ and $q$ be positive distributions
with compact support included in $[0,T]$, such that $a\ge a_{0}$ for some constant $a_{0}>0$.
Let $(u_0,u_1)\in {H}^{1+s}_\H \times {H}^{s}_\H$ for some $s\in\mathbb R$.
Then there exists an embedding of the coefficients $a$ and $q$ into $\mathcal G([0,T])$,
such that the Cauchy problem \eqref{CPa}, that is
\begin{equation*}
\left\{ \begin{split}
\partial_{t}^{2}u(t,x)+a(t)[\H+q(t)] u(t,x)&=0, \; (t,x)\in [0,T]\times \mathbb R^{2},\\
u(0,x)&=u_{0}(x), \; x\in \mathbb R^{2}, \\
\partial_{t}u(0,x)&=u_{1}(x), \; x\in \mathbb R^{2},
\end{split}
\right.
\end{equation*}
has a unique solution $u\in
\mathcal G([0,T]; H^s_{\H})$ for all $s\in\mathbb R$.

The same statement holds also for the Cauchy problem
\eqref{CPb}.
\end{thm}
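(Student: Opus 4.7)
The plan is to split the argument into an existence part, which is essentially a repackaging of Theorem \ref{theo_vws}, and a uniqueness part, which requires an inhomogeneous energy estimate applied to the difference of two representatives.

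For the embedding, I would take the Friedrichs regularisation with scale $\omega(\eps)\sim\log^{-1}(\eps^{-1})$, exactly as in the proof of Theorem \ref{theo_vws}, so that $a_{\eps}:=a\ast\psi_{\omega(\eps)}$ and $q_{\eps}:=q\ast\psi_{\omega(\eps)}$ are $C^{\infty}$-moderate nets representing classes in $\mathcal G([0,T])$. Compact support of $a,q$ ensures this is an embedding of $\mathcal E'([0,T])$ into $\mathcal G([0,T])$ in the usual sense (and preserves the lower bound $a_{\eps}\ge\tilde a_{0}>0$). Existence of a representative $(u_{\eps})_{\eps}$ which is $C^{\infty}([0,T];H^{s}_{\H})$-moderate is then immediate from Theorem \ref{theo_vws}, and iterating the regularised equation $\partial_{t}^{2}u_{\eps}=-a_{\eps}(\H+q_{\eps})u_{\eps}$ in $t$, together with the $L^{\infty}$ bounds on derivatives of $a_{\eps},q_{\eps}$ from \eqref{EQ: a-q-C-moderate}, propagates the $\H$-Sobolev moderateness to every order and every $s$, yielding an element of $\mathcal G([0,T];H^{s}_{\H})$.

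For uniqueness, let $(\tilde a_{\eps}),(\tilde q_{\eps})$ be any other representatives of the same classes, so that $n^{a}_{\eps}:=\tilde a_{\eps}-a_{\eps}$ and $n^{q}_{\eps}:=\tilde q_{\eps}-q_{\eps}$ are $C^{\infty}$-negligible, and let $(\tilde u_{\eps})_{\eps}$ be a moderate solution of the corresponding regularised problem. Set $w_{\eps}:=\tilde u_{\eps}-u_{\eps}$. Then $w_{\eps}$ satisfies
\begin{equation*}
\left\{
\begin{split}
\partial_{t}^{2}w_{\eps}+\tilde a_{\eps}(t)[\H+\tilde q_{\eps}(t)]w_{\eps} &= F_{\eps}(t,x),\\
w_{\eps}(0,x)&=0,\quad \partial_{t}w_{\eps}(0,x)=0,
\end{split}\right.
\end{equation*}
with forcing
\[
F_{\eps}=-n^{a}_{\eps}(t)[\H+q_{\eps}(t)]u_{\eps}-\tilde a_{\eps}(t)n^{q}_{\eps}(t)u_{\eps}.
\]
Since the $n^{a}_{\eps},n^{q}_{\eps}$ are negligible and $(u_{\eps})$ and $(\H u_{\eps})$ are moderate in $H^{s}_{\H}$, the net $(F_{\eps})_{\eps}$ is negligible in $C([0,T];H^{s}_{\H})$.

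The key step is then an inhomogeneous energy estimate for $w_{\eps}$, analogous to the computation of Section \ref{SEC:reduction} and its extension in Section \ref{SEC:Inhom}. Taking the $\H$-Fourier transform reduces the problem to a family of scalar Cauchy problems with forcing, for which the symmetriser $S_{\eps}$ is controlled by $\|\partial_{t}S_{\eps}\|\le C\omega(\eps)^{-L_{1}-L_{2}-1}$, and Duhamel together with Gronwall gives
\[
\|w_{\eps}(t,\cdot)\|_{H^{1+s}_{\H}}^{2}+\|\partial_{t}w_{\eps}(t,\cdot)\|_{H^{s}_{\H}}^{2}\le C\eps^{-M}\int_{0}^{t}\|F_{\eps}(\tau,\cdot)\|_{H^{s}_{\H}}^{2}\,d\tau,
\]
for some fixed $M$. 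Since $\|F_{\eps}\|_{H^{s}_{\H}}$ decays faster than any power of $\eps$ while the prefactor grows only polynomially, $(w_{\eps})_{\eps}$ is negligible in $C([0,T];H^{1+s}_{\H})\cap C^{1}([0,T];H^{s}_{\H})$; differentiating the equation in $t$ and using moderateness of $\tilde a_{\eps},\tilde q_{\eps}$ inductively promotes this to $\mathcal G([0,T];H^{s}_{\H})$-negligibility, giving uniqueness of the Colombeau class.

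The main obstacle I expect is bookkeeping of the two independent $\eps$-powers in the forcing estimate: one must check that the polynomial loss $\eps^{-M}$ produced by the symmetriser and by the moderateness of $u_{\eps},\H u_{\eps}$ is always beaten by the super-polynomial decay of $n^{a}_{\eps},n^{q}_{\eps}$, uniformly in the order of $\partial_{t}$ and in $s$. This is the reason negligibility is defined in terms of \emph{all} $\ell\in\mathbb N$, and is precisely what makes the choice $\omega(\eps)\sim\log^{-1}(\eps^{-1})$ compatible with the Colombeau framework. The argument for the companion problem \eqref{CPb} is identical, with the term $a_{\eps}q_{\eps}$ replaced by $q_{\eps}$ throughout, and does not require separate treatment.
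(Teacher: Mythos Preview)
Your proposal is correct and follows the same overall strategy as the paper: embed $a,q$ via the logarithmic-scale Friedrichs mollification, obtain existence from Theorem~\ref{theo_vws}, and for uniqueness write the difference $w_{\eps}=\tilde u_{\eps}-u_{\eps}$ as the solution of an inhomogeneous regularised equation with zero Cauchy data and $C^{\infty}([0,T];H^{s}_{\H})$-negligible forcing, then control it by the symmetriser/energy method on the $\H$-Fourier side.

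The only genuine difference is in how the forcing term is absorbed in the energy estimate. The paper, after arriving at
\[
\partial_{t}E_{\eps}(t,\xi)\le \|\partial_{t}S_{\eps}\|\,|V_{\eps}|^{2}+2\|S_{\eps}\|\,|F_{\eps}|\,|V_{\eps}|,
\]
performs a case analysis on the size of $|V_{\eps}(t,\xi)|$ (the cases $|V_{\eps}|>1$, $|V_{\eps}|\ge c\,\omega(\eps)^{\alpha}$, and $|V_{\eps}|\le c\,\omega(\eps)^{\alpha}$) so as to bound the cross term $|F_{\eps}|\,|V_{\eps}|$ by a multiple of $|V_{\eps}|^{2}$ and then invoke the homogeneous Gronwall argument verbatim with zero initial data. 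You instead run a direct Duhamel/Gronwall-with-source estimate and conclude from ``negligible source $\times$ polynomial loss $=$ negligible''. Your route is cleaner and avoids the case split; the paper's route has the small advantage of literally recycling the homogeneous computation of Section~\ref{SEC:reduction}. Either way the conclusion and the essential mechanism are the same.
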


Here $\mathcal G([0,T]; H^s_{\H})$ stands for the space of families which are
in $\mathcal G([0,T])$ with respect to $t$ and in $H^s_{\H}$ with respect to $x$.

\begin{proof}
Let us show that by embedding coefficients in the
corresponding Colombeau algebras the Cauchy problem has a unique
solution $u\in\mathcal G([0,T]; H^s_{\H})$. Assume now that the Cauchy problem has another
solution $v\in\mathcal G([0,T]; H^s_{\H})$. At the level of
representatives this means
\begin{equation*}
\left\{ \begin{split}
\partial_{t}^{2}(u_\eps-v_\eps)(t,x)+a_\eps(t)[\H+q_{\eps}(t)] (u_\eps-v_\eps)(t,x)&=f_{\eps}(t,x), \\
(u_\eps-v_\eps)(0,x)&=0,  \\
(\partial_{t}u_\eps-\partial_{t}v_\eps)(0,x)&=0,
\end{split}
\right.
\end{equation*}
with
$$
f_{\eps}(t,x)=(a_\eps(t)-\widetilde{a}_\eps(t))\H v_\eps(t,x)+(a_\eps(t)q_{\eps}(t)-\widetilde{a}_\eps(t)\widetilde{q}_{\eps}(t)) v_\eps(t,x),
$$
where $(\widetilde{a}_\eps)_{\eps}$ and $(\widetilde{q}_\eps)_{\eps}$ are approximations corresponding to $v_\eps$. It is obvious, that $f_{\eps}$ is $C^{\infty}([0,T]; H^s_{\H})$--negligible.
The corresponding first order system is
$$
\partial_t\left(
                             \begin{array}{c}
                               W_{1,\eps} \\
                                W_{2,\eps} \\
                             \end{array}
                           \right)
= \left(
    \begin{array}{cc}
      0 & i\H^{1/2}\\
      i a_{\eps}(t)[\H^{1/2}+q_{\eps}(t)\H^{-1/2}]& 0 \\
           \end{array}
  \right)
  \left(\begin{array}{c}
                               W_{1,\eps} \\

                               W_{2,\eps} \\
                             \end{array}
                           \right)+\left(\begin{array}{c}
                               0 \\

                               f_\eps \\
                             \end{array}
                           \right),
$$
where $W_{1,\eps}$ and $W_{2,\eps}$ are obtained via the
transformation
$$
W_{1,\eps}=\H^{1/2}(u_\eps-v_\eps),\,\,\,
W_{2,\eps}=\partial_t(u_\eps-v_\eps).
$$
This system will be studied after $\H$--Fourier transform, as a system
of the type
\begin{equation*}
\partial_t V_{\eps}(t,\xi)=i \nu(\xi) A_{\eps}(t, \xi)V_{\eps}(t,\xi)+F_{\eps}(t,\xi),
\end{equation*}
with
$$
F_\eps=\left(\begin{array}{c} 0 \\
    \mathcal{F}_{\H}f_\eps \\
             \end{array}
       \right),
$$
and
$$
A_{\eps}(t, \xi)=\left(
    \begin{array}{cc}
      0 & 1\\
      a_{\eps}(t)\left[1+\frac{1}{\nu^2(\xi)}q_{\eps}(t)\right] & 0 \\
           \end{array}
  \right),
$$
with Cauchy data
$$
V_{\eps}(0, \xi)=\left(
    \begin{array}{cc}
      0\\
      0 \\
           \end{array}
  \right).
$$
For the symmetriser
$$
S_{\eps}(t, \xi)=\left(
    \begin{array}{cc}
      a_{\eps}(t)\left[1+\frac{1}{\nu^2(\xi)}q_{\eps}(t)\right] & 0\\
      0 & 1 \\
           \end{array}
  \right)
$$
 define the energy
$$E_{\eps}(t,\xi):=(S_{\eps}(t, \xi)V_{\eps}(t,\xi),V_{\eps}(t,\xi)).$$
We get
\begin{align*}
\partial_t E_{\eps}(t,\xi)&=(\partial_t S_{\eps}(t, \xi)V_{\eps}(t,\xi),V_{\eps}(t,\xi))+(S_{\eps}(t, \xi)\partial_t V_{\eps}(t,\xi),V_{\eps}(t,\xi))\\
&\,\,\,\,\,\,\,\,\,\,\,\,\,\,\,\,\,\,\,\,\,\,\,\,\,\,\,\,\,\,\,\,\,\,\,\,\,\,\,\,\,\,\,\,\,\,\,\,\,\,\,\,\,\,\,\,\,\,\,\,\,\,\,\,
+(S_{\eps}(t, \xi)V_{\eps}(t,\xi),\partial_t V_{\eps}(t,\xi))\\
&=(\partial_t S_{\eps}(t, \xi)V_{\eps}(t,\xi),V_{\eps}(t,\xi))\\
&+i \nu(\xi)
(S_{\eps}(t, \xi)A_{\eps}(t, \xi)V_{\eps}(t,\xi),V_{\eps}(t,\xi))-i \nu(\xi) (S_{\eps}(t, \xi)V_{\eps}(t,\xi), A_{\eps}(t, \xi)V_{\eps}(t,\xi))\\
&+(S_{\eps}(t, \xi)F_{\eps}(t,\xi),V_{\eps}(t,\xi))+(S_{\eps}(t, \xi)V_{\eps}(t,\xi), F_{\eps}(t,\xi))\\
&=(\partial_t S_{\eps}(t, \xi)V_{\eps}(t,\xi),V_{\eps}(t,\xi))+i \nu(\xi) ((S_{\eps}A_{\eps}-A^{\ast}_{\eps} S_{\eps})(t, \xi)V_{\eps}(t,\xi),V_{\eps}(t,\xi))\\
&+(S_{\eps}(t, \xi)F_{\eps}(t,\xi),V_{\eps}(t,\xi))+(V_{\eps}(t,\xi), S_{\eps}(t, \xi) F_{\eps}(t,\xi))\\
&=(\partial_t S_{\eps}(t, \xi)V_{\eps}(t,\xi),V_{\eps}(t,\xi))+2\textrm{Re}(S_{\eps}(t, \xi)F_{\eps}(t,\xi),V_{\eps}(t,\xi))\\
&\le \Vert \partial_t S_{\eps}\Vert |V_{\eps}(t,\xi)|^2+2\textrm{Re}(S_{\eps}(t, \xi)F_{\eps}(t,\xi),V_{\eps}(t,\xi))\\
&\le \Vert \partial_t S_{\eps}\Vert |V_{\eps}(t,\xi)|^2+2\Vert
S_{\eps}\Vert |F_{\eps}(t,\xi)| |V_{\eps}(t,\xi)|.
\end{align*}
Assuming for the moment that $|V_{\eps}(t,\xi)|>1$, we get the
energy estimate
\begin{align*}
\partial_t E_{\eps}(t,\xi)&\le \Vert \partial_t S_{\eps}\Vert |V_{\eps}(t,\xi)|^2+2\Vert
S_{\eps}\Vert |F_{\eps}(t,\xi)| |V_{\eps}(t,\xi)|\\
&\le (\Vert \partial_t S_{\eps}\Vert +2\Vert
S_{\eps}\Vert |F_{\eps}(t,\xi)|)|V_{\eps}(t,\xi)|^2\\
&\le \left(|\partial_t a_{\eps}(t)||q_{\eps}(t)|+2|a_{\eps}(t)||\partial_t q_{\eps}(t)|+|a_{\eps}(t)||q_{\eps}(t)| |F_{\eps}(t,\xi)|\right)|V_{\eps}(t,\xi)|^2\\
&\le c\, \omega(\eps)^{-L_{1}-L_{2}-1} E_{\eps}(t,\xi),
\end{align*}
i.e. we obtain
\begin{equation}
\label{EQ:E_1}
\partial_t E_{\eps}(t,\xi)\le c\, \omega(\eps)^{-L_{1}-L_{2}-1} E_{\eps}(t,\xi),
\end{equation}
for some constant $c>0$. By Gronwall's lemma applied to
inequality \eqref{EQ:E_1} we conclude that for all $T>0$
$$
E_{\eps}(t,\xi)\le \exp(c\, \omega(\eps)^{-L_{1}-L_{2}-1} \, T) E_{\eps}(0,\xi).
$$
Hence, inequalities \eqref{est_sym_1} yield
\begin{align*}
c_0|V_{\eps}(t,\xi)|^2\le E_{\eps}(t,\xi)&\le \\
&\le \exp(c\, \omega(\eps)^{-L_{1}-L_{2}-1}\, T)  E_{\eps}(0,\xi) \\
&\le \exp(c_1\, \omega(\eps)^{-L_{1}-L_{2}-1}\, T) |V_{\eps}(0,\xi)|^2,
\end{align*}
for the constant $c_{1}$ independent of $t\in[0,T]$ and $\xi$.

By putting $\omega(\eps)\sim\log^{-1}(\eps)$, we get
\begin{align*}
|V_{\eps}(t,\xi)|^2\le c\, \eps^{-L_{1}-L_{2}-1} |V_{\eps}(0,\xi)|^2
\end{align*}
for some constant $c$ and some (new) $L_{1},L_{2}$.
Since $|V_{\eps}(0,\xi)|=0$, we have
$$
|V_{\eps}(t,\xi)|\equiv0,
$$
for all $\xi$ and for $t\in[0,T]$.

Now consider the case when $|V_{\eps}(t,\xi)|<1$. Assume that $|V_{\eps}(t,\xi)|\geq c \, \omega(\varepsilon)^{\alpha}$ for some constant $c$ and $\alpha>0$. It means
$$
\frac{1}{|V_{\eps}(t,\xi)|}\leq C \, \omega(\varepsilon)^{-\alpha}.
$$
Then the estimate for the energy becomes
\begin{align*}
\partial_t E_{\eps}(t,\xi)&\le C\, \omega(\varepsilon)^{-L} E_{\eps}(t,\xi),
\end{align*}
where $L=L_{1}+L_{2}+\textrm{max}\{1, \alpha\}$, and by Gronwall's lemma
$$
|V_{\eps}(t,\xi)|^2\le \exp(C'\, \omega(\varepsilon)^{-L} \, T) |V_{\eps}(0,\xi)|^2.
$$
And again, by putting $\omega(\eps)\sim\log^{-1}(\eps)$, we get
$$
|V_{\eps}(t,\xi)|^2\le c'\, \varepsilon^{-L} |V_{\eps}(0,\xi)|^2
$$
for some $c'$ and some (new) $L$.
Since $|V_{\eps}(0,\xi)|=0$, we have
$$
|V_{\eps}(t,\xi)|\equiv0,
$$
for all $t\in[0, T]$ and $\xi$.

The last case is when $|V_{\eps}(t,\xi)|\leq c \, \omega(\varepsilon)^{\alpha}$ for some constant $c$ and $\alpha>0$. Indeed, it completes the proof of Theorem \ref{theo_consistency-1}.
\end{proof}

\begin{proof}[Proof of Theorem \ref{theo_consistency-2}]
 We now want to compare the classical solution $\widetilde{u}$ given by Theorem \ref{theo_case_1} with the very weak solution $u$ provided by  Theorem \ref{theo_consistency-2}. By the
definition of the classical solution we know that
\begin{equation}\label{Consistency:EQ:2}
\left\{ \begin{split}
\partial_{t}^{2}\widetilde{u}(t,x)+a(t)[\H+q(t)] \widetilde{u}(t,x)&=0, \\
\widetilde{u}(0,x)&=u_{0}(x), \\
\partial_{t}\widetilde{u}(0,x)&=u_{1}(x).
\end{split}
\right.
\end{equation}
By the definition of the very weak solution $u$, there exists a representative $(u_\eps)_\eps$ of $u$ such
that
\begin{equation}\label{Consistency:EQ:3}
\left\{ \begin{split}
\partial_{t}^{2}u_{\eps}(t,x)+a_\eps(t)[\H+q_\eps(t)] u_{\eps}(t,x)&=0, \\
u_{\eps}(0,x)&=u_{0}(x),  \\
\partial_{t}u_{\eps}(0,x)&=u_{1}(x),
\end{split}
\right.
\end{equation}
for a suitable embedding of the coefficients $a$ and $q$. Noting that for $a, q\in L_{1}^{\infty}([0,T])$ the
nets $(a_{\eps}-a)_\eps$ and $(q_{\eps}-q)_\eps$ are converging to $0$ in
$C([0,T]\times\mathbb R^2)$, we can rewrite
\eqref{Consistency:EQ:2} as
\begin{equation}\label{Consistency:EQ:4}
\left\{ \begin{split}
\partial_{t}^{2}\widetilde{u}(t,x)+a_\eps(t)[\H+q_\eps(t)] \widetilde{u}(t,x)&=n_{\eps}(t,x), \\
\widetilde{u}(0,x)&=u_{0}(x),  \\
\partial_{t}\widetilde{u}(0,x)&=u_{1}(x),
\end{split}
\right.
\end{equation}
where $n_{\eps}(t,x)=[(a_\eps(t)-a(t))\H+(a_\eps(t)q_{\eps}(t)-a(t)q(t)) n_\eps(t,x),$
and $n_\eps\in C([0,T]; H^s_{\H})$ and converges to $0$ in this space as $\eps\to0$. From \eqref{Consistency:EQ:3} and \eqref{Consistency:EQ:4} we get that $\widetilde{u}-u_\eps$ solves the Cauchy problem
\begin{equation*}
\left\{ \begin{split}
\partial_{t}^{2}(\widetilde{u}-u_\eps)(t,x)+a_\eps(t)[\H+q_\eps(t)](\widetilde{u}-u_\eps)(t,x)&=n_{\eps}(t,x), \\
(\widetilde{u}-u_\eps)(0,x)&=0,  \\
(\partial_{t}\widetilde{u}-\partial_{t}u_\eps)(0,x)&=0.
\end{split}
\right.
\end{equation*}
As in the first part of the proof we arrive, after
reduction to a system and by application of the Fourier transform
to estimate $|(\widetilde{V}-V_\eps)(t,\xi)|$ in
terms of $(\widetilde{V}-V_\eps)(0,\xi)$ and the right-hand side
$n_\eps(t,x)$, to the energy estimate
\begin{align*}
\partial_t E_{\eps}(t,\xi)\le &\left(|\partial_t a_{\eps}(t)||q_{\eps}(t)|+|a_{\eps}(t)||\partial_t q_{\eps}(t)|\right) |(\widetilde{V}-V_\eps)(t,\xi)|^2\\
&+2|a_{\eps}(t)| |q_{\eps}(t)| |n_{\eps}(t,\xi)| |(\widetilde{V}-V_\eps)(t,\xi)|.
\end{align*}
Since the coefficients are regular
enough, we simply get
\begin{align*}
\partial_t E_{\eps}(t,\xi)&\le c_{1}\, |(\widetilde{V}-V_\eps)(t,\xi)|^2+c_{2}\, |n_{\eps}(t,\xi)| |(\widetilde{V}-V_\eps)(t,\xi)|.
\end{align*}
Since  $(\widetilde{V}-V_\eps)(0,\xi)=0$ and $n_\eps\to 0$ in
$C([0,T];H^s_{\H})$ and continuing to discussing as in Theorem \ref{theo_consistency-1} we conclude that
$|(\widetilde{V}-V_\eps)(t,\xi)|\leq c\, \omega(\varepsilon)^{\alpha}$ for some constant $c$ and $\alpha>0$. Then we have $u_\eps\to \widetilde{u}$
in $C([0,T],{H}^{1+s}_\H) \cap
C^1([0,T],{H}^{s}_\H)$.
Moreover, since any other
representative of $u$ will differ from $(u_\eps)_\eps$ by a
$C^\infty([0,T];H^s_{\H})$-negligible net, the limit is the
same for any representative of $u$.
\end{proof}

\section{Inhomogeneous equation case}
\label{SEC:Inhom}

In this section we are going to give brief ideas for how to deal with the inhomogeneous wave equation
\begin{equation}\label{CPa-InH}
\left\{ \begin{split}
\partial_{t}^{2}u(t,x)+a(t)[\H+q(t)] u(t,x)&=f(t,x), \; (t,x)\in [0,T]\times \mathbb R^{2},\\
u(0,x)&=u_{0}(x), \; x\in \mathbb R^{2}, \\
\partial_{t}u(0,x)&=u_{1}(x), \; x\in \mathbb R^{2},
\end{split}
\right.
\end{equation}
where $a=a(t)\geq 0$ is a distributional propagation speed function, $q=q(t)$ is the distributional electromagnetic scalar potential, $f=f(t,x)$ is the distributional source term, and $\H$ is the Landau Hamiltonian.

\begin{thm}
\label{theo_case_1-InH}
Given $f\in C([0,T]; {H}^{s}_\H)$. Assume that $a, q\in L_{1}^{\infty}([0,T])$ are such that $a(t)\ge a_0>0$ and $q(t)\geq 0$.
For any $s\in\Rr$, if the Cauchy data satisfy
$(u_0,u_1)\in {H}^{1+s}_\H \times {H}^{s}_\H$,
then the Cauchy problem \eqref{CPa-InH} has a unique solution
$u\in C([0,T]; {H}^{1+s}_\H) \cap
C^1([0,T]; {H}^{s}_\H)$ which satisfies the estimate
\begin{equation}
\label{case_1_last-est2}
\|u(t,\cdot)\|_{{H}^{1+s}_\H}^2+\| \partial_t u(t,\cdot)\|_{{H}^s_\H}^2\leq
C (\| u_0\|_{{H}^{1+s}_\H}^2+\|u_1\|_{{H}^{s}_\H}^2+\sup_{0\leq t\leq T}\|f(t, \cdot)\|_{{H}^{s}_\H}^2).
\end{equation}
\end{thm}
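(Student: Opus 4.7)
The plan is to adapt the Fourier-reduction scheme of the proof of Theorem \ref{theo_case_1} to the inhomogeneous setting. Taking the $\H$-Fourier transform of \eqref{CPa-InH} and fixing $\xi\in\N$, $1\le m,k\le 2$, one obtains the scalar Cauchy problem
\[
\partial_t^2 \widehat{v}(t,\xi)+a(t)(q(t)+B+2B\xi_2)\widehat{v}(t,\xi)=\widehat{f}(t,\xi)_{mk},
\]
with $\widehat{v}(t,\xi):=\widehat{u}(t,\xi)_{mk}$ and Cauchy data inherited from $\widehat{u}_0,\widehat{u}_1$. Writing $\nu^2(\xi)=B+2B\xi_2$ and performing the same first-order reduction $V_1=i\nu(\xi)\widehat{v}$, $V_2=\partial_t\widehat{v}$ as in Section \ref{SEC:reduction}, I obtain the system
\[
\partial_t V(t,\xi)=i\nu(\xi)A(t,\xi)V(t,\xi)+F(t,\xi),\qquad F(t,\xi)=\begin{pmatrix} 0 \\ \widehat{f}(t,\xi)_{mk}\end{pmatrix},
\]
with the same matrix $A(t,\xi)$ and the same symmetriser $S(t,\xi)$ defined in \eqref{EQ:Sdef}.

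Next I introduce the energy $E(t,\xi):=(S(t,\xi)V(t,\xi),V(t,\xi))$ and differentiate in $t$. The computation is identical to the one in Section \ref{SEC:reduction} except that two new terms $(S F,V)+(SV,SF)$ appear from the inhomogeneity; since $SA-A^\ast S=0$ cancels the purely hyperbolic contribution, one is left with
\[
\partial_t E(t,\xi)\le \Vert\partial_t S\Vert\,|V(t,\xi)|^2+2\Vert S\Vert\,|F(t,\xi)|\,|V(t,\xi)|.
\]
Since $a,q\in L^\infty_1([0,T])$ and $q\ge 0$, both $\Vert S\Vert$ and $\Vert\partial_t S\Vert$ are bounded uniformly in $\xi$, so after applying the elementary inequality $2|F||V|\le |F|^2+|V|^2$ and using the two-sided bound \eqref{est_sym_1}, I obtain
\[
\partial_t E(t,\xi)\le C E(t,\xi)+C|F(t,\xi)|^2.
\]
Gronwall's lemma, together with $V(0,\xi)=(i\nu(\xi)\widehat{u}_0(\xi)_{mk},\widehat{u}_1(\xi)_{mk})^T$, then yields
\[
|V(t,\xi)|^2\le C\Bigl(\nu^2(\xi)|\widehat{u}_0(\xi)_{mk}|^2+|\widehat{u}_1(\xi)_{mk}|^2+\int_0^T|\widehat{f}(\tau,\xi)_{mk}|^2\,d\tau\Bigr),
\]
with $C$ independent of $\xi,m,k,t$.

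Finally, multiplying by $\nu^{2s}(\xi)=(B+2B\xi_2)^s$, summing over $\xi\in\N$ and $1\le m,k\le 2$, and using Plancherel's identity \eqref{EQ:Plancherel} together with the characterisation \eqref{EQ:Hsub-norm} of the Sobolev norm, I arrive at
\[
\|u(t,\cdot)\|_{H^{1+s}_\H}^2+\|\partial_t u(t,\cdot)\|_{H^s_\H}^2\le C\Bigl(\|u_0\|_{H^{1+s}_\H}^2+\|u_1\|_{H^s_\H}^2+\int_0^T\|f(\tau,\cdot)\|_{H^s_\H}^2\,d\tau\Bigr),
\]
and bounding the time integral by $T\sup_{0\le t\le T}\|f(t,\cdot)\|_{H^s_\H}^2$ gives \eqref{case_1_last-est2}. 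Uniqueness follows by applying the same estimate to the difference of two solutions with vanishing data and $f=0$. The only genuinely new point compared to Theorem \ref{theo_case_1} is the treatment of the cross term $|F||V|$ in the energy estimate; the main mild obstacle is to make sure that the weight $\nu^{2s}$ only produces $\|f\|_{H^s_\H}$ on the right-hand side (and not $\|f\|_{H^{s+1}_\H}$), which is automatic because $F$ has its nonzero component in the second slot, corresponding to the $\partial_t$-variable weighted by $\nu^0$.
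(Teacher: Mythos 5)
Your proposal is correct and follows essentially the same route as the paper: Fourier reduction to the first-order system with the additional source term $F$, the same symmetriser-based energy, the bound $\partial_t E\le C_1 E+C_2|F|^2$, Gronwall, and reassembly via Plancherel. The only slip is a typo where you write the cross terms as $(SF,V)+(SV,SF)$ rather than $(SF,V)+(SV,F)$, but the inequality you derive from them is right.
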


Keeping the notations the same as in the proof of Theorem \ref{theo_case_1}, we can write equation \eqref{CPa-InH} as the first order system
\begin{equation}\label{EQ:system-InH}
\partial_t V(t,\xi)=i\nu(\xi) A(t,\xi)V(t,\xi)+F(t, \xi),
\end{equation}
where
$$
F(t, \xi)=\left(
    \begin{array}{c}
      0\\
      \widehat{f}(t, \xi)
     \end{array}
  \right),
$$
and for the energy $E(t,\xi):=(SV,V)$ we get
\begin{align*}
\partial_t E(t,\xi)&=(\partial_t SV,V)+(S\partial_t V,V)
+(SV,\partial_t V) \\
&=(\partial_t SV,V)-2\mathrm{Im}(SF,V) \\
&\le (\Vert \partial_t S\Vert+1) |V|^2+ \Vert SF \Vert^{2} \\
&\le \mathrm{max}(\Vert \partial_t S\Vert+1, \Vert S \Vert^{2}) (|V|^2+|F|^{2}) \\
&\le C_{1} E(t,\xi)+ C_{2} |F|^{2}
\end{align*}
with some constants $C_{1}$ and $C_{2}$. An application of Cronwall's lemma combined with the estimates \eqref{est_sym_1} implies
$$
|V|^{2}\leq c_{1}^{-1} E(t,\xi)\leq C_{1} |V_{0}|^{2}+ C_{2} \sup_{0\leq t\leq T}|F(t, \xi)|^{2},
$$
which is valid for all $t\in[0, T]$ with `new' constants $C_{1}$ and $C_{2}$ depending on $T$. By continuing our discussion as in the proof of Theorem \ref{theo_case_1}, we prove Theorem \ref{theo_case_1-InH}.

Let us formulate definition of the very weak solution for the inhomogeneous wave equation \eqref{CPa-InH}.
\begin{defi}
\label{def_vws-InH}
Let $s\in\mathbb R$, $f\in C([0,T]; {H}^{s}_\H)$ and $u_{0},u_{1}\in {H}^{s}_\H$. The net $(u_\eps)_\eps\in C^\infty([0,T];{H}^{s}_\H)$ is {\em a very weak solution of order $s$} of the
Cauchy problem \eqref{CPa-InH} if there exist
\begin{itemize}
\item[]
$C^\infty$-moderate regularisations $a_{\eps}$ and $q_{\eps}$ of the coefficients $a$ and $q$,
\item[]
$C^\infty([0,T];{H}^{s}_\H)$-moderate regularisation $f_{\eps}$ of the source term $f$,
\end{itemize}
such that $(u_\eps)_\eps$ solves the regularised problem
\begin{equation*}\label{CPbb-InH}
\left\{ \begin{split}
\partial_{t}^{2}u_{\eps}(t,x)+a_{\eps}(t)[\H+q_{\eps}(t)] u_{\eps}(t,x)&=f_{\eps}(t, x), \; (t,x)\in [0,T]\times \mathbb R^{2},\\
u_{\eps}(0,x)&=u_{0}(x), \; x\in \mathbb R^{2}, \\
\partial_{t}u_{\eps}(0,x)&=u_{1}(x), \; x\in \mathbb R^{2},
\end{split}
\right.
\end{equation*}
for all $\eps\in(0,1]$, and is $C^\infty([0,T];{H}^{s}_\H)$-moderate.
\end{defi}

Without significant changes in the proofs of Theorems \ref{theo_vws}, \ref{theo_consistency-2} and \ref{theo_consistency-1}, we conclude the following modified results for the Cauchy problem \eqref{CPa-InH} for the inhomogeneous wave equation.

\begin{thm}[Existence]
\label{theo_vws-InH}
Let the coefficients $a$ and $q$ of the Cauchy problem \eqref{CPa-InH} be positive distributions
with compact support included in $[0,T]$, such that $a\ge a_{0}$ for some constant $a_{0}>0$, and  let the source term $f(\cdot, x)$ be a distribution with compact support included in $[0,T]$.
Let $s\in\mathbb R$ and let the Cauchy data $(u_0, u_1)$ be in ${H}^{s+1}_\H\times {H}^{s}_\H$ and the source term $f(t, \cdot)$ be in ${H}^{s}_\H$.
Then the Cauchy problem \eqref{CPa-InH} has a very weak solution of order $s$.
\end{thm}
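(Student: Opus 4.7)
\medskip
\noindent\textbf{Proof proposal for Theorem \ref{theo_vws-InH}.} The plan is to mimic the strategy of Theorem \ref{theo_vws}, but now with an additional regularisation of the source term $f$ and an application of the inhomogeneous energy estimate \eqref{case_1_last-est2}. First, I would fix a Friedrichs mollifier $\psi\in C^\infty_0(\mathbb R)$, $\psi\ge 0$, $\int\psi=1$, a scale $\omega(\eps)\to 0^{+}$ to be tuned at the end, and set
\begin{equation*}
a_\eps:=a\ast\psi_{\omega(\eps)},\qquad q_\eps:=q\ast\psi_{\omega(\eps)},\qquad f_\eps(t,x):=(f(\cdot,x)\ast\psi_{\omega(\eps)})(t).
\end{equation*}
Exactly as in the proof of Theorem \ref{theo_vws}, positivity of $a$ and $q$ together with compactness of $\mathrm{supp}\,\psi$ yields $a_\eps\geq\tilde a_0>0$ uniformly in $\eps$, and the structure theorem for compactly supported distributions provides $L_1,L_2,L_3\in\mathbb N$ such that
\begin{equation*}
|\partial_t^k a_\eps(t)|\le c_1\omega(\eps)^{-L_1-k},\qquad |\partial_t^k q_\eps(t)|\le c_2\omega(\eps)^{-L_2-k},
\end{equation*}
and, viewing $f$ as a vector-valued distribution in $t$ with values in $H^s_\H$,
\begin{equation*}
\sup_{t\in[0,T]}\|\partial_t^k f_\eps(t,\cdot)\|_{H^s_\H}\le c_3\omega(\eps)^{-L_3-k},
\end{equation*}
so that $(a_\eps)_\eps,(q_\eps)_\eps$ are $C^\infty$-moderate and $(f_\eps)_\eps$ is $C^\infty([0,T];H^s_\H)$-moderate.

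Next, for each fixed $\eps\in(0,1]$, the coefficients $a_\eps,q_\eps$ satisfy the hypotheses of Theorem \ref{theo_case_1-InH}, so the regularised Cauchy problem
\begin{equation*}
\partial_t^2 u_\eps+a_\eps(t)[\H+q_\eps(t)]u_\eps=f_\eps,\qquad u_\eps(0,\cdot)=u_0,\qquad\partial_t u_\eps(0,\cdot)=u_1
\end{equation*}
has a unique solution $u_\eps\in C([0,T];H^{1+s}_\H)\cap C^1([0,T];H^s_\H)$. I would then repeat the symmetriser argument of Section \ref{SEC:pf2}, now tracking explicitly the $\eps$-dependence of the symmetriser $S_\eps$ defined via \eqref{EQ:Sdef} with $a$, $q$ replaced by $a_\eps$, $q_\eps$. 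The bound
\begin{equation*}
\|\partial_t S_\eps(t,\xi)\|\le C(|\partial_t a_\eps||q_\eps|+|a_\eps||\partial_t q_\eps|)\le C\,\omega(\eps)^{-L_1-L_2-1},
\end{equation*}
combined with the forced energy inequality $\partial_t E_\eps\le C_1 E_\eps+C_2|F_\eps|^{2}$ from the proof of Theorem \ref{theo_case_1-InH}, gives after an application of Gronwall's lemma an estimate of the form
\begin{equation*}
\|u_\eps(t,\cdot)\|_{H^{1+s}_\H}^{2}+\|\partial_t u_\eps(t,\cdot)\|_{H^s_\H}^{2}\le C\exp\!\bigl(c\,\omega(\eps)^{-L_1-L_2-1}T\bigr)\bigl(\|u_0\|_{H^{1+s}_\H}^{2}+\|u_1\|_{H^s_\H}^{2}+\sup_{[0,T]}\|f_\eps\|_{H^s_\H}^{2}\bigr).
\end{equation*}
Choosing $\omega(\eps)\sim\log^{-1}(\eps)$ as in Theorem \ref{theo_vws} converts the exponential into a power $\eps^{-N}$ for some $N=N(L_1,L_2)$; together with the bound for $\sup_{[0,T]}\|f_\eps\|_{H^s_\H}\le c_3\omega(\eps)^{-L_3}$ this gives the zero- and first-order moderate bounds
\begin{equation*}
\|u_\eps(t,\cdot)\|_{H^{1+s}_\H}\le c\,\eps^{-N'},\qquad \|\partial_t u_\eps(t,\cdot)\|_{H^s_\H}\le c\,\eps^{-N'},
\end{equation*}
for some new $N'$.

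Finally, to upgrade to $C^\infty([0,T];H^s_\H)$-moderateness of $(u_\eps)_\eps$, I would use the equation itself: rewrite
\begin{equation*}
\partial_t^2 u_\eps=f_\eps-a_\eps\H u_\eps-a_\eps q_\eps u_\eps,
\end{equation*}
and differentiate in $t$ inductively, applying $\H$ to absorb the factor of $\H u_\eps$ into Sobolev norms of $u_\eps$ by means of the characterisation \eqref{EQ:Hsub-norm}. Each time derivative contributes at most a fixed negative power of $\omega(\eps)$ (from $\partial_t^k a_\eps,\partial_t^k q_\eps,\partial_t^k f_\eps$) and a factor $\eps^{-N'}$ from the preceding step, so by induction there exist $N_k,c_k$ with $\|\partial_t^k u_\eps(t,\cdot)\|_{H^s_\H}\le c_k\eps^{-N_k}$, which is the required moderateness. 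The main obstacle is purely book-keeping: controlling the interaction between the Gronwall-generated exponential factor (which forces the logarithmic choice of $\omega(\eps)$) and the inhomogeneous term $f_\eps$, whose moderateness order $L_3$ must also be absorbed by the same choice of $\omega(\eps)$; no new analytic difficulty appears beyond what is already treated in Theorem \ref{theo_vws}.
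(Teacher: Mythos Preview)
Your proposal is correct and follows precisely the approach the paper intends: the paper does not spell out a separate proof for Theorem \ref{theo_vws-InH} but simply states that it follows ``without significant changes'' from the proof of Theorem \ref{theo_vws}, using the inhomogeneous estimate of Theorem \ref{theo_case_1-InH} in place of Theorem \ref{theo_case_1}. Your write-up carries out exactly this adaptation --- regularising $f$ alongside $a,q$, inserting the forcing term into the energy inequality, and handling the extra moderateness exponent $L_3$ under the same logarithmic choice of $\omega(\eps)$ --- so nothing is missing.
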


Now let us formulate the theorem saying that very weak solutions recapture the classical solutions in the case the latter exist. This happens, for example, under conditions of Theorem \ref{theo_case_1-InH}. So, we can compare the solution given by Theorem \ref{theo_case_1-InH} with the very weak solution in Theorem \ref{theo_vws-InH} under assumptions when Theorem \ref{theo_case_1-InH} holds.

\begin{thm}[Consistency]
\label{theo_consistency-2-InH}
Assume that $a, q\in L_{1}^{\infty}([0,T])$ are such that $a(t)\ge a_0>0$ and $q(t)\geq 0$, and $f\in C([0,T],{H}^{s}_\H)$.
Let $s\in\mathbb R$, and consider the Cauchy problem
\begin{equation}\label{Consistency:EQ:1-2-InH}
\left\{ \begin{split}
\partial_{t}^{2}u(t,x)+a(t)[\H+q(t)] u(t,x)&=f(t, x), \; (t,x)\in [0,T]\times \mathbb R^{2},\\
u(0,x)&=u_{0}(x), \; x\in \mathbb R^{2}, \\
\partial_{t}u(0,x)&=u_{1}(x), \; x\in \mathbb R^{2},
\end{split}
\right.
\end{equation}
with $(u_0,u_1)\in {H}^{1+s}_\H \times {H}^{s}_\H$. Let $u$ be a very weak solution of
\eqref{Consistency:EQ:1-2-InH}. Then for any regularising families $a_{\eps}, q_{\eps}, f_{\eps}$ in Definition \ref{def_vws-InH}, any representative $(u_\eps)_\eps$ of $u$ converges in $C([0,T],{H}^{1+s}_\H) \cap
C^1([0,T],{H}^{s}_\H)$ as $\eps\rightarrow0$
to the unique classical solution in $C([0,T],{H}^{1+s}_\H) \cap
C^1([0,T],{H}^{s}_\H)$ of the Cauchy problem \eqref{Consistency:EQ:1-2-InH}
given by Theorem \ref{theo_case_1-InH}.
\end{thm}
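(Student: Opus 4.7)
The plan is to follow closely the argument used in the proof of Theorem \ref{theo_consistency-2}, making only the modifications forced by the presence of the source term. Let $\widetilde{u}$ denote the unique classical solution of \eqref{Consistency:EQ:1-2-InH} provided by Theorem \ref{theo_case_1-InH}, and let $(u_\eps)_\eps$ be a representative of the given very weak solution $u$, so that, for the regularisations $a_\eps, q_\eps, f_\eps$ fixed in Definition \ref{def_vws-InH},
\[
\partial_t^2 u_\eps + a_\eps(t)[\H + q_\eps(t)] u_\eps = f_\eps, \qquad u_\eps(0,\cdot)=u_0, \quad \partial_t u_\eps(0,\cdot)=u_1.
\]
Since $a,q\in L^\infty_1([0,T])$, the standard properties of Friedrichs mollifiers give $a_\eps\to a$ and $q_\eps\to q$ in $C([0,T])$, and $f_\eps\to f$ in $C([0,T];H^s_\H)$. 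Rewriting the classical equation in terms of $a_\eps, q_\eps$ produces
\[
\partial_t^2 \widetilde{u} + a_\eps[\H+q_\eps]\widetilde{u} = f + n_\eps,
\]
where $n_\eps := (a_\eps-a)\H\widetilde{u} + (a_\eps q_\eps - a q)\widetilde{u}$, which belongs to $C([0,T];H^s_\H)$ and converges to $0$ there, using $\widetilde{u}\in C([0,T];H^{1+s}_\H)$.

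Subtracting, the net $w_\eps := \widetilde{u}-u_\eps$ solves the inhomogeneous Cauchy problem
\[
\partial_t^2 w_\eps + a_\eps(t)[\H+q_\eps(t)] w_\eps = g_\eps, \qquad w_\eps(0,\cdot)=0, \quad \partial_t w_\eps(0,\cdot)=0,
\]
with $g_\eps := n_\eps + (f - f_\eps)$ converging to $0$ in $C([0,T];H^s_\H)$. I would then reduce to the first order system after taking the $\H$-Fourier transform, exactly as in Section \ref{SEC:Inhom}, arriving at
\[
\partial_t V_\eps = i\nu(\xi) A_\eps(t,\xi) V_\eps + G_\eps, \qquad V_\eps(0,\xi)=0,
\]
with $G_\eps = (0,\widehat{g_\eps})^{\mathrm{T}}$, and use the symmetriser $S_\eps$ from \eqref{EQ:Sdef} to form the energy $E_\eps(t,\xi)=(S_\eps V_\eps,V_\eps)$. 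The crucial observation here is that because $a,q\in L^\infty_1([0,T])$, the nets $a_\eps, q_\eps, \partial_t a_\eps, \partial_t q_\eps$ are bounded on $[0,T]$ \emph{uniformly in} $\eps$, so $\|\partial_t S_\eps\|$ and $\|S_\eps\|$ admit $\eps$-independent bounds. Computing as in the proof of Theorem \ref{theo_case_1-InH}, one obtains
\[
\partial_t E_\eps(t,\xi) \le C_1 E_\eps(t,\xi) + C_2 |\widehat{g_\eps}(t,\xi)|^2
\]
with $C_1, C_2$ independent of $\eps$ and $\xi$. An application of Gronwall's lemma and of the equivalence \eqref{est_sym_1} between $E_\eps$ and $|V_\eps|^2$, together with $V_\eps(0,\xi)=0$, yields
\[
|V_\eps(t,\xi)|^2 \le C\sup_{t\in[0,T]} |\widehat{g_\eps}(t,\xi)|^2.
\]
Summing over $\xi$ with appropriate powers of $(B+2B\xi_2)$ and invoking Plancherel, as in Section \ref{SEC:reduction}, gives
\[
\|w_\eps(t,\cdot)\|_{H^{1+s}_\H}^2 + \|\partial_t w_\eps(t,\cdot)\|_{H^s_\H}^2 \le C \sup_{t\in[0,T]}\|g_\eps(t,\cdot)\|_{H^s_\H}^2 \to 0,
\]
so $u_\eps \to \widetilde{u}$ in $C([0,T];H^{1+s}_\H)\cap C^1([0,T];H^s_\H)$.

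Finally, independence of the limit on the chosen representative follows as in Theorem \ref{theo_consistency-2}: any two representatives of $u$ differ by a $C^\infty([0,T];H^s_\H)$-negligible net, which is absorbed in the same energy estimate and contributes $0$ in the limit. The main obstacle I anticipate is the bookkeeping of the two separate sources of error, $n_\eps$ (from mollifying the coefficients against the fixed classical solution) and $f - f_\eps$ (from the moderate regularisation of the source term prescribed by Definition \ref{def_vws-InH}); unlike in the distributional setting of Theorem \ref{theo_consistency-1}, no logarithmic scale $\omega(\eps)\sim\log^{-1}(\eps)$ is needed here because the regularity of $a,q$ makes all coefficient-related bounds $\eps$-uniform, and one only needs that $g_\eps\to 0$ in $C([0,T];H^s_\H)$, which is guaranteed by the moderateness hypothesis on $f_\eps$ combined with $f_\eps\to f$.
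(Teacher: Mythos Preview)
Your proposal is correct and follows essentially the same approach the paper intends: the paper does not give a separate proof of Theorem \ref{theo_consistency-2-InH} but simply states that it follows ``without significant changes'' from the proof of Theorem \ref{theo_consistency-2}, and your argument is precisely that proof with the additional source-term error $(f-f_\eps)$ absorbed into the right-hand side $g_\eps$. Your observation that the $L^\infty_1$ regularity of $a,q$ yields $\eps$-uniform bounds on $\|S_\eps\|$ and $\|\partial_t S_\eps\|$ (so no logarithmic scale is needed) matches exactly the paper's remark ``since the coefficients are regular enough'' in the homogeneous consistency proof.
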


\begin{thm}[Uniqueness]
\label{theo_consistency-1-InH}
Let $a$ and $q$ be positive distributions
with compact support included in $[0,T]$, such that $a\ge a_{0}$ for some constant $a_{0}>0$, and  the source term $f(\cdot, x)$ be a distribution with compact support included in $[0,T]$.
Let $(u_0,u_1)\in {H}^{1+s}_\H \times {H}^{s}_\H$ and the source term $f(t, \cdot)$ be in ${H}^{s}_\H$ for some $s\in\mathbb R$.
Then there exists an embedding of the coefficients $a$ and $q$ into $\mathcal G([0,T])$ and of $f$ into $\mathcal G([0,T]; H^s_{\H})$,
such that the Cauchy problem \eqref{CPa-InH}, that is
\begin{equation*}
\left\{ \begin{split}
\partial_{t}^{2}u(t,x)+a(t)[\H+q(t)] u(t,x)&=f(t, x), \; (t,x)\in [0,T]\times \mathbb R^{2},\\
u(0,x)&=u_{0}(x), \; x\in \mathbb R^{2}, \\
\partial_{t}u(0,x)&=u_{1}(x), \; x\in \mathbb R^{2},
\end{split}
\right.
\end{equation*}
has a unique solution $u\in
\mathcal G([0,T]; H^s_{\H})$ for all $s\in\mathbb R$.
\end{thm}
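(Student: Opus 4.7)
\emph{Proof plan.} The strategy is to combine the existence argument of Theorem \ref{theo_vws-InH} with the Colombeau uniqueness scheme of Theorem \ref{theo_consistency-1}, now carrying a forcing term through the energy identity. For existence in $\mathcal G([0,T];H^s_\H)$, I would take the embeddings of $a,q$ obtained by Friedrichs mollification with scale $\omega(\eps)\sim\log^{-1}(\eps)$, so that $(a_\eps)_\eps,(q_\eps)_\eps$ are $C^\infty$-moderate with $a_\eps\ge\widetilde a_0>0$, and similarly regularise $f$ to produce a $C^\infty([0,T];H^s_\H)$-moderate net $(f_\eps)_\eps$. Applying Theorem \ref{theo_case_1-InH} to the regularised problem produces a unique classical net $(u_\eps)_\eps$, which is $C^\infty([0,T];H^s_\H)$-moderate by the inductive differentiation-in-$t$ argument of Section \ref{SEC:pf2}. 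This is the representative of $u\in\mathcal G([0,T];H^s_\H)$.

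For uniqueness, let $v\in\mathcal G([0,T];H^s_\H)$ be a second solution with representatives $(v_\eps)_\eps$ associated to possibly different moderate embeddings $(\widetilde a_\eps,\widetilde q_\eps,\widetilde f_\eps)$. The difference $w_\eps:=u_\eps-v_\eps$ then satisfies the same type of Cauchy problem with zero data and right-hand side
\[
g_\eps=(f_\eps-\widetilde f_\eps)+(\widetilde a_\eps-a_\eps)\H v_\eps+(\widetilde a_\eps\widetilde q_\eps-a_\eps q_\eps)v_\eps,
\]
which, being a sum of products of negligible and moderate nets, is $C^\infty([0,T];H^s_\H)$-negligible. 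Reducing to a first-order system as in Section \ref{SEC:pf2}, taking the $\H$-Fourier transform and using the symmetriser $S_\eps$ from \eqref{EQ:Sdef} together with the moderate bounds \eqref{EQ: a-q-C-moderate}, the energy $E_\eps(t,\xi):=(S_\eps V_\eps,V_\eps)$ satisfies, after the skew-symmetric cancellation $S_\eps A_\eps-A_\eps^\ast S_\eps=0$ and a Cauchy--Schwarz on the source term,
\[
\partial_t E_\eps(t,\xi)\le c\,\omega(\eps)^{-L}E_\eps(t,\xi)+c\,\omega(\eps)^{-L}|\widehat{g_\eps}(t,\xi)|^2
\]
for some $L\in\mathbb N$.

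Gronwall's lemma with zero initial energy, combined with the choice $\omega(\eps)\sim\log^{-1}(\eps)$, converts the $\exp(c\,\omega(\eps)^{-L}T)$ factor into $\eps^{-N}$. Multiplying by the appropriate powers of $(B+2B\xi_2)$ and summing over $\xi$ via Plancherel as in the proof of Theorem \ref{theo_case_1} then yields
\[
\|w_\eps(t,\cdot)\|_{H^{s+1}_\H}+\|\partial_t w_\eps(t,\cdot)\|_{H^s_\H}\le c\eps^{-N}\sup_{0\le t\le T}\|g_\eps(t,\cdot)\|_{H^s_\H},
\]
and since $(g_\eps)_\eps$ is negligible of every order, the right-hand side is $O(\eps^\ell)$ for every $\ell$. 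Inductive differentiation of the equation in $t$ propagates this to $C^\infty([0,T];H^s_\H)$-negligibility, giving $u=v$ in $\mathcal G([0,T];H^s_\H)$. The main obstacle is the familiar one already encountered in Theorem \ref{theo_consistency-1}: arranging that the exponential blow-up coming from the moderate coefficient derivatives is absorbed by the negligibility of $g_\eps$, and this is precisely what the logarithmic rescaling of the mollifier achieves, with the inhomogeneous contribution folded in through a standard Gronwall estimate with forcing.
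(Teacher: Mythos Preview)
Your proposal is correct and follows essentially the approach the paper indicates: the paper does not give a separate proof of this theorem but states that it follows ``without significant changes'' from the proofs of Theorems \ref{theo_vws} and \ref{theo_consistency-1}, combined with the inhomogeneous energy estimate derived for Theorem \ref{theo_case_1-InH}. Your write-up carries this out faithfully, and in fact your use of a Gronwall inequality with forcing (splitting $2|F_\eps||V_\eps|\le |F_\eps|^2+|V_\eps|^2$) is the cleaner route the paper itself adopts in Section \ref{SEC:Inhom}, avoiding the somewhat awkward case distinction on $|V_\eps(t,\xi)|$ that appears in the proof of Theorem \ref{theo_consistency-1}.
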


\section{Landau Hamiltonian in $\mathbb R^{2d}$}
\label{SEC:n}

Here we indicate a few changes in the multidimensional case compared to that in 2D.
Let $x=(x_{1},\ldots,x_{2d})\in\mathbb R^{2d}$ and setting all physical constants to be equal to $1$, in analogy to the case of $d=1$ in \eqref{eq:LandauHamiltonian}, let

\begin{equation} \label{eq:Hamiltonian2d}
\H:=\frac12 \p{i \nabla-\A}^{2},
\end{equation}
where
$$
\A=\p{- B_{1}x_{2}, B_{1}x_{1}, -B_{2}x_{4}, B_{2}x_{3},\ldots,-B_{d}x_{2d},B_{d}x_{2d-1}},
$$
corresponding to the magnetic fields of constant strengths $2B_{l}>0$, $l=1,\ldots,d$.
The essentially self-adjoint operator $\H$ on $C_{0}^{\infty}(\mathbb R^{2d})$ in the Hilbert space
$L^{2}(\mathbb R^{2d})=\otimes_{1}^{d}L^{2}(\mathbb R^{2})$ decomposes as
$$
\H=\H_{1}\otimes I^{\otimes (d-1)}+I\otimes \H_{2}\otimes I^{\otimes (d-2)}+\cdots+
 I^{\otimes (d-1)}\otimes \H_{d},
$$
with self-adjoint 2D operators $\H_{l}$ on $L^{2}(\mathbb R^{2})$ as in \eqref{eq:LandauHamiltonian}.
Let $k=(k_{1},\ldots, k_{d})\in\mathbb N_{0}^{d}$ be a multi-index.
Then in analogy to \eqref{eq:HamiltonianEigenvalues},
the spectrum of $\H$ consists of the infinitely degenerate eigenvalues
\begin{equation} \label{eq:HamiltonianEigenvalues2d}
\lambda_{k}=\sum_{l=1}^{d} B_{l}(2k_{l}+1),
\end{equation}
with eigenfunctions corresponding to \eqref{eq:HamiltonianBasis}.
In particular, in the isotropic case when $B_{l}=B>0$ for all $l$, for two multi-indices $k,k'\in \mathbb N_{0}^{d}$, if $|k|=|k'|$ then
$\lambda_{k}=\lambda_{k'}$ so that the spectrum of $\H$ consists of eigenvalues of the form $\lambda_{m}=B(2m+1)$ with $m\in\mathbb N_{0}.$
We refer e.g. to \cite{P09} and references therein for more details on the spectral analysis of this case.

We can now consider the Cauchy problem
\begin{equation}\label{CPa2d}
\left\{ \begin{split}
\partial_{t}^{2}u(t,x)+a(t)[\H+q(t)] u(t,x)&=0, \; (t,x)\in [0,T]\times \mathbb R^{2d},\\
u(0,x)&=u_{0}(x), \; x\in \mathbb R^{2d}, \\
\partial_{t}u(0,x)&=u_{1}(x), \; x\in \mathbb R^{2d},
\end{split}
\right.
\end{equation}
as a generalisation of \eqref{CPa}, with an analogous generalisation of \eqref{CPb}:
\begin{equation}\label{CPb2d}
\left\{ \begin{split}
\partial_{t}^{2}u(t,x)+a(t)\H u(t,x)+q(t) u(t,x)&=0, \; (t,x)\in [0,T]\times \mathbb R^{2d},\\
u(0,x)&=u_{0}(x), \; x\in \mathbb R^{2d}, \\
\partial_{t}u(0,x)&=u_{1}(x), \; x\in \mathbb R^{2d}.
\end{split}
\right.
\end{equation}
Let us indicate briefly the changes in the corresponding Fourier analysis generated by $\H$.
Thus, for any $s\in\Rr$, we set
$$
H^s_\H:=\left\{ f\in\Dcal'_{\H}(\mathbb R^{2d}): \H^{s/2}f\in
L^2(\mathbb R^{2d})\right\},
$$
with the norm $\|f\|_{H^s_\H}:=\|\H^{s/2}f\|_{L^{2}}$ given by
\begin{equation}\label{EQ:Sob12d}
\|f\|_{H^s_\H}= \p{\sum_{l=1}^{d} \sum_{\xi\in\N} (B_{l}+2B_{l}\xi_{2})^{s}
\sum_{j=1}^{2} \left| \int_{\mathbb
R^{2}}f(x)\overline{e^{j}_{l,\xi}(x)}dx\right|^2  }^{1/2},
\end{equation}
where for each $l$ the function $e_{l,\xi}^{j}$ is as in \eqref{eq:HamiltonianBasis}-\eqref{EQ:eks} with $B_{l}$ instead of $B$.

Consequently, all the statements of Theorem \ref{theo_case_1}, Theorem \ref{theo_consistency-2}
and Theorem \ref{theo_consistency-1} continue to hold for the Cauchy problems \eqref{CPa2d} and \eqref{CPb2d}. Namely, we have

\begin{thm}[Classical solutions]
\label{theo_case_12d}
Assume that $a, q\in L_{1}^{\infty}([0,T])$ are such that $a(t)\ge a_0>0$ and $q(t)\geq 0$.
For any $s\in\Rr$, if the Cauchy data satisfy
$(u_0,u_1)\in {H}^{1+s}_\H \times {H}^{s}_\H$,
then the Cauchy problems \eqref{CPa2d} and \eqref{CPb2d} have unique solutions
$u\in C([0,T],{H}^{1+s}_\H) \cap
C^1([0,T],{H}^{s}_\H)$ satisfying the estimate
\begin{equation}
\label{case_1_last-est2d}
\|u(t,\cdot)\|_{{H}^{1+s}_\H}^2+\| \partial_t u(t,\cdot)\|_{{H}^s_\H}^2\leq
C (\| u_0\|_{{H}^{1+s}_\H}^2+\|u_1\|_{{H}^{s}_\H}^2).
\end{equation}
\end{thm}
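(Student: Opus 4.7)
The plan is to mirror the proof of Theorem \ref{theo_case_1} almost verbatim, using the Fourier analysis adapted to the multidimensional Landau Hamiltonian $\H$ on $\mathbb R^{2d}$. The key point is that $\H$ retains the structure of a positive, essentially self-adjoint operator with purely discrete spectrum \eqref{eq:HamiltonianEigenvalues2d}, and its symbol in its own Fourier calculus is again a scalar Fourier multiplier. Consequently, the analogue of \eqref{EQ:subL-symbol} holds with $\nu_j^2(\xi)$ replaced by the appropriate eigenvalues built from $B_l$ and the multi-index $k\in\mathbb N_0^d$, and the Sobolev norm has the representation \eqref{EQ:Sob12d}.

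First I would apply the $\H$-Fourier transform to \eqref{CPa2d} and \eqref{CPb2d}. As in Section \ref{SEC:reduction}, this decouples the Cauchy problem into a family of scalar ODEs of the form
\begin{equation*}
\partial_t^2 \widehat v(t,\xi)+a(t)\nu^2(\xi)\left[1+\frac{q(t)}{\nu^2(\xi)}\right]\widehat v(t,\xi)=0,\qquad \widehat v(0,\xi)=\widehat v_0(\xi),\;\partial_t\widehat v(0,\xi)=\widehat v_1(\xi),
\end{equation*}
indexed by the Fourier parameter $\xi$, where $\nu^2(\xi)$ is now the corresponding multidimensional eigenvalue (a linear combination of the $B_l$'s as in \eqref{eq:HamiltonianEigenvalues2d}). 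The crucial feature preserved in higher dimensions is that $\nu^2(\xi)\ge B_{\min}:=\min_l B_l>0$ uniformly in $\xi$, and the coefficient $1+q(t)/\nu^2(\xi)$ is uniformly bounded from above and below by positive constants independent of $\xi$ whenever $q\ge 0$ and $a\ge a_0>0$.

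Next I would run the standard first-order reduction $V_1=i\nu(\xi)\widehat v$, $V_2=\partial_t\widehat v$, introduce the symmetriser $S(t,\xi)$ as in \eqref{EQ:Sdef}, and form the energy $E(t,\xi)=(SV,V)$. The computation that $SA-A^*S=0$ and $\partial_t E\le \|\partial_t S\|\,|V|^2$ is identical to the 2D case; the uniform bounds \eqref{est_sym_1} carry over since they depended only on the positivity of $a$ and $q$ and the uniform lower bound on $\nu^2(\xi)$. Gronwall then yields \eqref{case_1_est} with a constant independent of $\xi$, which after undoing the substitutions gives the pointwise-in-$\xi$ estimate
\begin{equation*}
\nu^2(\xi)|\widehat u(t,\xi)|^2+|\partial_t\widehat u(t,\xi)|^2\le C\bigl(\nu^2(\xi)|\widehat u_0(\xi)|^2+|\widehat u_1(\xi)|^2\bigr).
\end{equation*}
Multiplying by $\nu^{2s}(\xi)$, summing over $\xi$, and invoking Plancherel together with the Sobolev norm representation \eqref{EQ:Sob12d} produces \eqref{case_1_last-est2d}. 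The treatment of \eqref{CPb2d} requires only trivial modifications, as noted already in the 2D proof.

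I do not expect a real obstacle here: the only point that needs to be checked is that the general Fourier analysis of Section \ref{SEC:Prelim} genuinely extends to the multidimensional $\H$, i.e.\ that the eigenfunctions built as tensor products from \eqref{eq:HamiltonianBasis} form a complete orthogonal system in $L^2(\mathbb R^{2d})$ and give rise to a Plancherel identity compatible with \eqref{EQ:Sob12d}. Once this is granted (and it is, by the tensor product structure of $\H$ together with the known 2D theory), every estimate in Section \ref{SEC:reduction} transfers without modification, since the proof only used the scalar nature of the symbol, positivity of $\nu^2(\xi)$, and the boundedness of $a,q,\partial_t a,\partial_t q$ on $[0,T]$. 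Hence Theorem \ref{theo_case_12d} follows.
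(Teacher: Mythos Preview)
Your proposal is correct and follows exactly the approach the paper intends: the paper explicitly omits the proof, stating that Theorems \ref{theo_case_12d} and \ref{theo_vws2d} ``follow by an easy adaptation of the corresponding 2D proofs,'' and your sketch carries out precisely that adaptation of Section \ref{SEC:reduction} using the tensor-product spectral structure of the multidimensional $\H$.
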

 We also have the corresponding very weak solutions result.

 \begin{thm}[Very weak solutions]
\label{theo_vws2d}
Let the coefficients $a$ and $q$ of the Cauchy problem \eqref{CPa2d} be positive distributions
with compact support included in $[0,T]$, such that $a\ge a_{0}$ for some constant $a_{0}>0$.
Let $s\in\mathbb R$ and let the Cauchy data $u_0, u_1$ be in ${H}^{s}_\H$. Then we have the following statements:

\begin{itemize}
\item (Existence) The Cauchy problem \eqref{CPa2d} has a very weak solution of order $s$.

\item (Uniqueness)
There exists an embedding of the coefficients $a$ and $q$ into $\mathcal G([0,T])$
such that the Cauchy problem \eqref{CPa2d} has a unique solution $u\in
\mathcal G([0,T]; H^s_{\H})$.

\item (Consistency)
Let $u$ be a very weak solution of \eqref{CPa2d}.
If $a, q\in L_{1}^{\infty}([0,T])$ are such that $a(t)\ge a_0>0$ and $q(t)\geq 0$, then
for any regularising families $a_{\eps}, q_{\eps}$, any representative $(u_\eps)_\eps$ of $u$ converges in $C([0,T],{H}^{1+s}_\H) \cap
C^1([0,T],{H}^{s}_\H)$ as $\eps\rightarrow0$
to the unique classical solution in $C([0,T],{H}^{1+s}_\H) \cap
C^1([0,T],{H}^{s}_\H)$ of the Cauchy problem \eqref{CPa2d}
given by Theorem \ref{theo_case_12d}.
\end{itemize}

The same result is true also for the Cauchy problem \eqref{CPb2d}.
\end{thm}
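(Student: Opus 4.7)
The plan is to reduce the multidimensional case to exactly the same scalar analysis that drove Theorems \ref{theo_case_1}, \ref{theo_vws}, \ref{theo_consistency-2} and \ref{theo_consistency-1}, exploiting the tensor-product structure of $\H$ on $L^{2}(\mathbb R^{2d})=\otimes_{1}^{d}L^{2}(\mathbb R^{2})$. First I would set up the $\H$-Fourier analysis on $\mathbb R^{2d}$: the eigenbasis of $\H$ is the tensor product of the 2D bases $e^{j}_{l,\xi}$ from \eqref{eq:HamiltonianBasis}, and the operator acts as a Fourier multiplier whose symbol $\sigma_{\H}(\xi)$ is diagonal with diagonal entry equal to $\lambda_{\xi}=\sum_{l=1}^{d}B_{l}(2\xi_{l,2}+1)$. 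The Sobolev norm \eqref{EQ:Sob12d} then agrees with $\|\sigma_{\H}(\xi)^{s/2}\widehat{f}(\xi)\|_{\mathtt{HS}}$ summed against the counting measure on the multi-index set, so Plancherel's identity works verbatim.

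Next I would take the $\H$-Fourier transform of \eqref{CPa2d} to obtain, for each multi-index $\xi$ and each scalar matrix entry, the decoupled second order ODE
\begin{equation*}
\partial_{t}^{2}\widehat{v}(t,\xi)+a(t)\,\nu^{2}(\xi)\!\left[1+\frac{q(t)}{\nu^{2}(\xi)}\right]\!\widehat{v}(t,\xi)=0,\qquad \nu^{2}(\xi):=\lambda_{\xi},
\end{equation*}
which is formally identical to \eqref{eq_xi}. Since $\nu^{2}(\xi)\geq \min_{l}B_{l}>0$, the symmetriser construction \eqref{EQ:Sdef} and the Gronwall argument in Section \ref{SEC:reduction} apply with no modification, giving the estimate \eqref{case_1_last-est2d} after summing in $\xi$ via Plancherel. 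This proves the analog of Theorem \ref{theo_case_1} in the multidimensional setting, i.e.\ Theorem \ref{theo_case_12d} which is assumed to be the classical-solvability ingredient for consistency.

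For the very weak existence statement, I would regularise $a,q$ exactly as in Section \ref{SEC:pf2} by $a_{\eps}=a*\psi_{\omega(\eps)}$, $q_{\eps}=q*\psi_{\omega(\eps)}$; positivity of $a,q$ plus compact support again yields $a_{\eps}\geq \tilde a_{0}>0$ and $C^{\infty}$-moderateness \eqref{EQ: a-q-C-moderate}. Applying Theorem \ref{theo_case_12d} to the regularised Cauchy problem and the symmetriser estimate $\Vert\partial_{t}S_{\eps}\Vert\leq C\omega(\eps)^{-L_{1}-L_{2}-1}$ produces the exponential Gronwall bound \eqref{ES: exp-1}; the choice $\omega(\eps)\sim\log^{-1}(\eps)$ then gives polynomial-in-$\eps$ control of $\|u_{\eps}(t,\cdot)\|_{H^{1+s}_{\H}}$, and differentiating the equation in $t$ inductively yields the moderateness of all $\partial_{t}^{k}u_{\eps}$, exactly as in Section \ref{SEC:pf2}. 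Uniqueness in the Colombeau sense $\mathcal G([0,T];H^{s}_{\H})$ is obtained by repeating Theorem \ref{theo_consistency-1} applied to the difference of two representatives; the three-case energy analysis on $|V_{\eps}(t,\xi)|$ there uses only the scalar ODE structure and the bounds on $a_{\eps},q_{\eps}$, so it transfers verbatim. Finally, consistency follows as in the proof of Theorem \ref{theo_consistency-2}: write $a_{\eps}=a+(a_{\eps}-a)$, $q_{\eps}=q+(q_{\eps}-q)$, subtract the regularised system from the classical one, and apply the same inhomogeneous energy estimate with right-hand side $n_{\eps}\to 0$ in $C([0,T];H^{s}_{\H})$.

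The only point that is not strictly formal bookkeeping is the verification that the tensor-product Fourier analysis of $\H$ on $\mathbb R^{2d}$ fits the abstract framework of \cite{RT16} in the same self-adjoint way as the 2D case, in particular that $\sigma_{\H}(\xi)$ remains diagonal and bounded below; this is where I expect the only subtlety, and it is handled by the explicit spectral decomposition \eqref{eq:HamiltonianEigenvalues2d}. Once this is in place, every estimate in Sections \ref{SEC:reduction}--\ref{SEC:cons} carries over with $\nu^{2}(\xi)=\lambda_{\xi}$ replacing $B+2B\xi_{2}$, and the three bullets in Theorem \ref{theo_vws2d} follow without essentially new arguments.
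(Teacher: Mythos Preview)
Your proposal is correct and follows exactly the approach the paper intends: the paper omits the proof entirely, stating that it follows by an easy adaptation of the corresponding 2D arguments, and your reduction via the tensor-product spectral decomposition of $\H$ with $\nu^{2}(\xi)=\lambda_{\xi}$ replacing $B+2B\xi_{2}$ is precisely that adaptation. Your detailed sketch of carrying over Sections \ref{SEC:reduction}--\ref{SEC:cons} is faithful to what the paper claims can be done without new ideas.
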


These theorems follow by an easy adaptation of the corresponding 2D proofs so we omit them.


\begin{thebibliography}{ABGM15}

\bibitem[A10]{A10} L.D. Abreu. Sampling and interpolation in Bargmann-Fock spaces of polyanalytic functions, {\em Appl. Comput. Harmon. Anal.}, 29: 287--302, 2010.

\bibitem[ABGM15]{ABGM15} L.D. Abreu, P. Balazs, M. de Gosson, Z. Mouayn. Discrete coherent
states for higher Landau levels. {\em Annals of Physics}, 363: 337--353,
2015.

\bibitem[ABG12]{ABG12} S.T. Ali, F. Bagarello, J.P. Gazeau. Quantizations from reproducing kernel spaces, {\em Annals of Physics}, 332: 127--142, 2012.

\bibitem[BG14]{BG14} H. Bergeron, J.P. Gazeau. Integral quantizations with two basic examples, {\em Annals of Physics}, 344: 43--68, 2014.

\bibitem[CC13]{Cicognani-Colombini:JDE-2013}
M.~Cicognani and F.~Colombini.
\newblock A well-posed {C}auchy problem for an evolution equation with
  coefficients of low regularity.
\newblock {\em J. Differential Equations}, 254(8):3573--3595, 2013.

\bibitem[CDGS79]{Colombini-deGiordi-Spagnolo-Pisa-1979}
F.~Colombini, E.~De~Giorgi, and S.~Spagnolo.
\newblock Sur les {\'e}quations hyperboliques avec des coefficients qui ne
  d{\'e}pendent que du temps.
\newblock {\em Ann. Scuola Norm. Sup. Pisa Cl. Sci. (4)}, 6(3):511--559, 1979.

\bibitem[CDSK02]{Colombini-del-Santo-Kinoshita:ASNS-2002}
F.~Colombini, D.~Del~Santo, and T.~Kinoshita.
\newblock Well-posedness of the {C}auchy problem for a hyperbolic equation with
  non-{L}ipschitz coefficients.
\newblock {\em Ann. Sc. Norm. Super. Pisa Cl. Sci. (5)}, 1(2):327--358, 2002.

\bibitem[CDSR03]{Colombini-del-Santo-Reissig:BSM-2003}
F.~Colombini, D.~Del~Santo, and M.~Reissig.
\newblock On the optimal regularity of coefficients in hyperbolic {C}auchy
  problems.
\newblock {\em Bull. Sci. Math.}, 127(4):328--347, 2003.


\bibitem[CGG10]{CGG10} N. Cotfas, J.P Gazeau, K. Grorska. Complex and real Hermite polynomials and related quantizations, {\em J. Phys. A: Math. Theor.}, 43(30): 305304, 1--14, 2010.

\bibitem[DS98]{DS}
P.~D'Ancona and S.~Spagnolo.
\newblock Quasi-symmetrization of hyperbolic systems and propagation of the
  analytic regularity.
\newblock {\em Boll. Unione Mat. Ital. Sez. B Artic. Ric. Mat. (8)},
  1(1):169--185, 1998.

\bibitem[DR16]{DR16}
A. Dasgupta, M. Ruzhansky. Eigenfunction expansions of ultradifferentiable functions and ultradistributions, {\em Trans. Amer. Math. Soc.},
 368 (2016), no. 12, 8481--8498.

\bibitem[DRT16]{DRT16}
J. Delgado, M. Ruzhansky and N. Tokmagambetov, Schatten classes, nuclearity and nonharmonic analysis on compact manifolds with boundary, to appear in {\em J. Math. Pures Appl.}

\bibitem[F28]{F28} V. Fock, Bemerkung zur Quantelung des harmonischen
Oszillators im Magnetfeld, {\em Zeitschrift f\"ur Physik A}, 47(5--6):
446--448, 1928.

\bibitem[GR15]{GR15}
C. Garetto, M. Ruzhansky. Wave equation for sums of squares on
compact Lie groups, {\em J. Differential Equations}, 258: 4324--4347,
2015.

\bibitem[GR15b]{GR15b} C. Garetto, M. Ruzhansky. Hyperbolic Second Order Equations with Non-Regular Time Dependent Coefficients, {\em Arch. Rational Mech. Anal.}, 217(1): 113--154, 2015.

\bibitem[G08]{G08} M. de Gosson. Spectral properties of a class of generalized Landau operators, {\em Comm. Partial Differential Equations}, 33(11): 2096--2104, 2008.

\bibitem[HH13]{HH13}
A.~Haimi and H.~Hedenmalm.
\newblock The polyanalytic Ginibre ensembles.
\newblock {\it J. Stat. Phys.}, 153(1):10--47, 2013.

\bibitem[I16]{I16} M. Ismail. Analytic properties of complex Hermite polynomials,  {\em Trans. Amer. Math. Soc.}, 368(2): 1189--1210, 2016.

\bibitem[K16]{K16} M. Kawamoto. Exponential decay property for eigenfunctions of
Landau--Stark Hamiltonian, {\em Reports on Mathematical Physics}, 77(1):
129--140, 2016.

\bibitem[KP04]{KP04} E. Korotyaev and A. Pushnitski. A trace formula and high--energy
spectral asymptotics for the perturbed Landau Hamiltonian, {\em J.
Funct. Anal.}, 217: 221--248, 2004.

\bibitem[L30]{L30} L. Landau. Diamagnetismus der Metalle, {\em Zeitschrift f\"ur Physik
A}, 64(9--10): 629--637, 1930.

\bibitem[LL77]{LL77} L. Landau, E. Lifshitz. Quantum Mechanics (Non-relativistic
theory). 3rd ed. Oxford: Pergamon Press, 1977.

\bibitem[LR14]{LR14} T. Lungenstrass and G. Raikov. A trace formula for long-range
perturbations of the Landau Hamiltonian, {\em Ann. Henri Poincare}, 15:
1523--1548, 2014.

\bibitem[M91]{M91} H. Matsumoto. Classical and non-classical eigenvalue asymptotics
for magnetic Schr\"odinger operators, {\em J. Funct. Anal.}, 95:
460--482, 1991.

\bibitem[M08]{M08}
G. M\'etivier. Para-differential calculus and applications to the Cauchy problem for nonlinear systems. Centro di Ricerca Matematica Ennio De Giorgi (CRM) Series, Edizioni della Normale, Pisa, 2008.

\bibitem[N96]{N96} Sh. Nakamura. Gaussian decay estimates for the eigenfunctions of
magnetic Schr\"odinger operators, {\em Comm. Part. Diff. Eq.}, 21(5-6):
993--1006, 1996.

\bibitem[Ob92]{Oberguggenberger:Bk-1992}
M.~Oberguggenberger.
\newblock {\em Multiplication of distributions and applications to partial
  differential equations}, volume 259 of {\em Pitman Research Notes in
  Mathematics Series}.
\newblock Longman Scientific \& Technical, Harlow, 1992.

\bibitem[P86]{P86}  A.~Perelomov.
\newblock  {\em Generalized Coherent States and Their Applications. Texts and Monographs in Physics.}
\newblock Springer, Berlin, 1986.

\bibitem[P09]{P09} M.~Persson.
Eigenvalue asymptotics of the even-dimensional exterior Landau-Neumann Hamiltonian.
{\em Advances in Mathematical Physics}, Article ID 873704, 2009.

\bibitem[PRV13]{PRV13} A. Pushnitski and G. Raikov, C.
Villegas-Blas. Asymptotic density of eigenvalue clusters for the
perturbed Landau Hamiltonian, {\em Comm. Math. Phys.}, 320: 425--453,
2013.

\bibitem[PR07]{PR07} A. Pushnitski and G. Rozenblum. Eigenvalue clusters of
the Landau Hamiltonian in the exterior of a compact domain, {\em Doc.
Math.}, 12: 569--586, 2007.

\bibitem[RT08]{RT08} G. Rozenblum and G. Tashchiyan. On the spectral
properties of the perturbed Landau Hamiltonian, {\em Comm. Part. Diff.
Eq.}, 33: 1048--1081, 2008.

\bibitem[RT16]{RT16}
M.~Ruzhansky and N.~Tokmagambetov.
\newblock Nonharmonic analysis of boundary value problems.
\newblock {\it Int. Math. Res. Not. IMRN},  2016, no. 12, 3548--3615.

\bibitem[RT16a]{RT16a}
M.~Ruzhansky and N.~Tokmagambetov.
\newblock Nonharmonic analysis of boundary value problems without WZ condition.
To appear in \newblock {\it Math. Model. Nat. Phenom.}

\bibitem[RT10]{Ruzhansky-Turunen:BOOK}
M.~Ruzhansky and V.~Turunen.
\newblock {\em Pseudo-differential operators and symmetries. Background
  analysis and advanced topics}, volume~2 of {\em Pseudo-Differential
  Operators. Theory and Applications}.
\newblock Birkh{\"a}user Verlag, Basel, 2010.


\bibitem[RT13]{Ruzhansky-Turunen:IMRN}
M.~Ruzhansky and V.~Turunen.
\newblock Global quantization of pseudo-differential operators on compact {L}ie
  groups, {$\rm SU(2)$}, 3-sphere, and homogeneous spaces.
\newblock {\it Int. Math. Res. Not. IMRN}, (11):2439--2496, 2013.

\bibitem[S14]{S14} D. Sambou. Lieb--Thirring type inequalities for
non--self--adjoint perturbations of magnetic Schr\"odinger
operators, {\em J. Funct. Anal.}, 266: 5016--5044, 2014.

\bibitem[S54]{S54}
L. Schwartz. Sur l'impossibilit\'e de la multiplication des distributions.
{\em C. R. Acad. Sci. Paris}, 239:847--848, 1954.

\end{thebibliography}
\end{document}